\newtheorem{thm}{Theorem}[section]
\newtheorem{prop}[thm]{Proposition}
\newtheorem{lemma}[thm]{Lemma}
\newtheorem{cor}[thm]{Corollary}
\theoremstyle{definition}
\newtheorem{defin}[thm]{Definition}
\theoremstyle{remark}
\numberwithin{equation}{section}
\newcommand{\Q}{\mathbb Q}
\newcommand{\Z}{\mathbb Z}
\newcommand{\G}{\mathbb G}
\renewcommand{\c}{\subseteq}
\newcommand{\A}{\mathbb A}
\newcommand{\cl}{\overline}
\renewcommand{\phi}{\varphi}
\newcommand{\on}[1]{\operatorname{#1}}
\DeclareFontFamily{U}{wncy}{}
\DeclareFontShape{U}{wncy}{m}{n}{<->wncyr10}{}
\DeclareSymbolFont{mcy}{U}{wncy}{m}{n}
\DeclareMathSymbol{\Sh}{\mathord}{mcy}{"58}
\title{Retract rationality and algebraic tori}
\author{Federico Scavia}
\begin{document}
	
	\begin{abstract} For any prime number $p$ and field $k$, we characterize the $p$-retract rationality of an algebraic $k$-torus in terms of its character lattice. We show that a $k$-torus is retract rational if and only if it is $p$-retract rational for every prime $p$, and that the Noether problem for retract rationality for a group of multiplicative type $G$ has an affirmative answer for $G$ if and only if the Noether problem for $p$-retract rationality for $G$ has a positive answer for all $p$. For every finite set of primes $S$ we give examples of tori that are $p$-retract rational if and only if $p\notin S$. \end{abstract}
	\maketitle	
	\section{Introduction}
	
	A classical question in algebraic geometry is to decide whether a given variety is rational, that is, birational to some affine space. Questions of this type are called "rationality problems". For a given variety it is often easier to establish weaker properties, such as unirationality, stable rationality or retract rationality. However, deciding under what circumstances these weaker properties imply rationality leads to difficult questions. In particular, the L\"uroth problem asks whether every unirational variety is rational, and the Zariski problem asks whether every stably rational variety is rational. The Noether problem asks whether, for a given algebraic group $G$ and generically free finite-dimensional linear $G$-representation $V$, the rational quotient variety $V/G$ is rational. 	
	
	The first constructions of varieties that are unirational but not rational (over $\mathbb{C}$) were given by Clemens-Griffiths \cite{clemens1972intermediate} and Manin-Iskovskikh \cite{iskovskih1971three}, thus answering the L\"uroth problem in the negative. Unramified cohomology, defined by Artin-Mumford \cite{artin1972some} and
	Saltman~\cite{saltman1984retract}, was used to construct further examples of unirational varieties that are not rational. In particular, Saltman showed that the Noether problem over $\mathbb C$ has a negative answer for some finite groups $G$.
	The first counterexamples to the Zariski problem were constructed by Beauville, Colliot-Th\'{e}l\`ene, Sansuc and Swinnerton-Dyer in~\cite{beauville1985varietes}. There has been much recent work on the rationality problem for projective hypersurfaces, using the degeneration method, due to Voisin \cite{voisin2015unirational} and subsequently generalized by Colliot-Th\'{e}l\`ene-Pirutka \cite{colliot2016hypersurfaces} and others. For further developments, see~\cite{totaro2016hypersurfaces} \cite{schreieder2018stably}.
	
	Recall that a variety $X$ is defined to be retract rational if the identity map of $X$ factors rationally through some affine space. If $p$ is a prime, $X$ is defined to be $p$-retract rational if there exists a diagram
	\[
	\begin{tikzcd}
	& X' \arrow[d, "f"] \arrow[dl] \\
	\A^n \arrow[r, dashrightarrow] & X  
	\end{tikzcd}
	\]\noindent
	where $f$ is a finite dominant morphism of degree not divisible by $p$. The definition of retract rationality is due to Saltman \cite{saltman1982generic}. The notion of $p$-retract rationality, recently introduced by Merkurjev \cite{merkurjev2018versal}, naturally leads to the following question: if $X$ is a $p$-retract rational variety for every prime $p$, is it retract rational? In other words, is retract rationality a $p$-local property? As Merkurjev shows in \cite[Corollary 7.5]{merkurjev2018versal}, unramified cohomology cannot tell the difference between the class of retract rational varieties and that of varieties which are $p$-retract rational for every prime $p$. 
	
	In this note we consider the notion of $p$-retract rationality in the context of algebraic tori. The birational geometry of tori has been intensely studied by Voskresenski\u{\i} \cite{voskresenskii1965two} \cite{voskresenskii1971rationality} \cite{voskresenskii1973fields}, Endo-Miyata \cite{miyata1971invariants} \cite{endo1973invariants} \cite{endo1975classification}, Colliot-Th\'el\`ene-Sansuc \cite{colliot1977r} \cite{colliot1987principal}, Kunyavski\u{\i} \cite{kunyavskii1978tori} \cite{kunyavskii1987three} and many others; the standard reference on this material is \cite{voskresenskii2011algebraic}. 
	
	If $k$ is a field, an algebraic $k$-torus is determined by its character lattice, viewed as an integral representation of the absolute Galois group of $k$. Birational properties of algebraic tori frequently translate to properties of their character lattice, with the notable exception of rationality. The Zariski problem in the case of tori is known as Voskresenski\u{\i}'s Conjecture; see \cite[p. 68]{voskresenskii2011algebraic}.	
	
	The main technical result of this paper is a criterion for $p$-retract rationality of a $k$-torus $T$ in terms of its character lattice $\hat{T}$; see~\Cref{character}.
	As a consequence of this description we will show that retract rationality of tori is a $p$-local property in the following sense.
	
	\begin{thm}\label{local}
		Let $T$ be an algebraic torus. Then $T$ is retract rational if and only if it is $p$-retract rational for every prime $p$.
	\end{thm}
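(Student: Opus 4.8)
The plan is to reduce Theorem \ref{local} entirely to the combinatorial criterion of \Cref{character}, which characterizes $p$-retract rationality of $T$ in terms of the Galois-lattice $\hat{T}$. The key principle I expect to exploit is that retract rationality of tori is governed by lattice-theoretic invariants whose behavior across all primes $p$ can be assembled into a single integral statement. Concretely, I anticipate that \Cref{character} says something of the form ``$T$ is $p$-retract rational if and only if $\hat{T}$ is invertible (a direct summand of a permutation lattice) after localizing at $p$,'' or equivalently that a certain cohomological/flasque obstruction class vanishes $p$-locally. Granting such a statement, the forward implication is immediate: a retract rational torus satisfies the integral condition, which forces the $p$-local condition for each $p$.

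The substance is the converse. First I would recall the flasque resolution $0 \to \hat{T} \to P \to F \to 0$ of the character lattice, where $P$ is permutation and $F$ is flasque, and use the standard dictionary (due to Colliot-Th\'el\`ene--Sansuc) translating retract rationality of $T$ into the condition that $F$ is an \emph{invertible} lattice, i.e.\ a direct summand of a permutation lattice. The task then becomes purely module-theoretic: show that if $F$ is $p$-locally invertible for every prime $p$, then $F$ is invertible over $\Z$. Here ``$p$-locally invertible'' should correspond precisely to the condition extracted from \Cref{character}. I would set this up over the integral group ring $\Z[\Gamma]$ for a suitable finite quotient $\Gamma$ of the Galois group through which the action factors, and phrase invertibility as the splitting of a short exact sequence of $\Gamma$-lattices.

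The main obstacle, and the heart of the argument, is this integral-versus-$p$-local splitting step. The natural strategy is a local-global principle for lattices over $\Z[\Gamma]$: invertibility of a lattice can be tested prime by prime because a lattice that is a direct summand of a permutation lattice after tensoring with $\Z_{(p)}$ for every $p$ must already be a summand integrally. I would try to prove this by showing that the relevant $\on{Ext}$ or $\on{Hom}$ obstruction group is finite (indeed annihilated by $|\Gamma|$), so that its vanishing can be checked one prime at a time, and then invoking that vanishing $p$-locally for all $p \mid |\Gamma|$ (the condition being automatic for $p \nmid |\Gamma|$ since $\Z_{(p)}[\Gamma]$ is then a maximal order and all lattices are projective). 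The delicate point is ensuring that the $p$-retract rationality hypothesis from \Cref{character} matches exactly the $p$-local invertibility needed, and that patching the $p$-local splittings yields a genuine integral splitting rather than merely a stable one; I expect this to follow from the finiteness of the obstruction together with a Chinese Remainder / Roiter-type gluing argument for lattices over orders.

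Finally, once integral invertibility of $F$ is established, I would invoke the dictionary once more to conclude that $T$ is retract rational, completing the converse and hence the theorem.
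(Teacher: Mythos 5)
Your reduction is the same as the paper's: translate via \Cref{character} into the statement that the flasque class $[\hat{T}]^{\on{fl}}$ is $p$-invertible for every $p$, prove that a lattice which is $p$-invertible for all $p$ is invertible, and conclude by the Colliot-Th\'el\`ene--Sansuc/Saltman dictionary. The only point where you diverge is the proof of that local-to-global lemma. The paper (\Cref{firstprop}\ref{firstprop2}) does it by a direct B\'ezout argument: for each $p$ one has $\phi_{M,d_p}=g_p\circ f_p$ factoring through a permutation lattice $P_p$ with $p\nmid d_p$; since the $d_p$ have no common prime factor, write $1=\sum a_i d_{p_i}$ and observe that $\on{id}_M=\sum a_i g_{p_i}\circ f_{p_i}$ factors through $\bigoplus P_{p_i}$. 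Your route via the finiteness of the $\on{Ext}$ obstruction also works and is consistent with the paper's toolkit (its Lemma on Sylow subgroups proves exactly the equivalence between $p$-invertibility of $M$ and $\on{Ext}^1_G(F',M)_{(p)}=0$ for flasque $F'$): take a flasque resolution $0\to F\to P'\to F''\to 0$ of the flasque lattice $F\in[\hat{T}]^{\on{fl}}$; its class in the finite, $|G|$-torsion group $\on{Ext}^1_G(F'',F)$ dies in every localization, hence vanishes, so the sequence splits integrally and $F$ is a summand of $P'$. Note that your worry about obtaining only a stable splitting, and the appeal to a Roiter-type gluing, are unnecessary: vanishing of that single extension class already yields an honest direct-sum decomposition $P'\cong F\oplus F''$. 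The paper's B\'ezout argument buys a more elementary and constructive proof; your $\on{Ext}$-theoretic version buys nothing extra here but connects more directly to the cohomological characterizations used elsewhere in the paper.
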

	
	
	Let $G$ be a linear algebraic group, and let $V$ be a finite-dimensional generically free representation of $G$. There exists a dense $G$-invariant open subset $U$ of $V$ such that the geometric quotient $U/G$ exists and $U\to U/G$ is a $G$-torsor. One may regard $U/G$ as a variety approximating the classifying stack $BG$. By the no-name lemma \cite[Lemma 2.1]{reichstein2006birational}, the stable (retract, $p$-retract) rationality of $U/G$ does not depend on the representation $V$ but only on $G$. We say that $BG$ is stably (retract, $p$-retract) rational if so is $U/G$. In different but equivalent terminology, this means that the Noether problem for stable (retract, $p$-retract) rationality has an affirmative answer; see \cite[\S 3]{florence2018genus0}. \Cref{local} has the following consequence.
	
	\begin{cor}\label{bglocal}
		Let $G$ be a group of multiplicative type over $k$. If $BG$ is $p$-retract rational for every prime $p$, then it is retract rational.
	\end{cor}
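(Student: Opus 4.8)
The plan is to reduce the statement about the classifying space of a group of multiplicative type $G$ to the case of an algebraic torus, where \Cref{local} applies. Write $\hat{G}$ for the character module of $G$, a finitely generated module over the absolute Galois group $\Gamma$ of $k$. Since any finitely generated $\Gamma$-module is a quotient of a permutation lattice, I would choose a surjection $L \twoheadrightarrow \hat{G}$ with $L$ a permutation $\Gamma$-lattice and set $N = \ker(L \to \hat{G})$, which is torsion-free and hence the character lattice of a torus. Applying the anti-equivalence between groups of multiplicative type and finitely generated $\Gamma$-modules turns the exact sequence $0 \to N \to L \to \hat{G} \to 0$ into a short exact sequence of algebraic groups
\[
1 \to G \to Q \to S \to 1,
\]
where $Q$ is a quasi-trivial torus (dual to the permutation lattice $L$) and $S$ is a torus (dual to $N$). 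In particular $Q \to S$ is a $G$-torsor.

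The heart of the argument is to show that $BG$ and $S$ are stably birational. Fix a generically free representation $V$ of $G$ and a dense open $U \c V$ on which $G$ acts freely, so that $U/G$ computes the rationality properties of $BG$, and consider the diagonal $G$-action on $Q \times U$, which is again free. I would then analyze the two projections of $(Q\times U)/G$. Pushing the torsor $U \to U/G$ forward along $G \hookrightarrow Q$ identifies $(Q\times U)/G \to U/G$ with a $Q$-torsor; since $Q$ is quasi-trivial, hence a special group, this torsor is Zariski-locally trivial, and as $Q$ is rational we find that $(Q\times U)/G$ is birational to $(U/G) \times \A^{\dim Q}$, thus stably birational to $BG$. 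On the other hand, viewing $Q \to S$ as a $G$-torsor and $U$ as an open subset of the representation $V$, the projection $(Q \times U)/G \to S$ is an open subscheme of the vector bundle $Q\times^G V \to S$; vector bundles are Zariski-locally trivial, so $(Q\times U)/G$ is birational to $S \times \A^{\dim V}$. Comparing the two computations shows that $BG$ and $S$ are stably birational.

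To conclude, I would use that both retract rationality and $p$-retract rationality are invariants of stable birational equivalence, a standard fact going back to Saltman and Merkurjev. If $BG$ is $p$-retract rational for every prime $p$, then the torus $S$ is $p$-retract rational for every $p$; by \Cref{local} the torus $S$ is retract rational, and hence so is $BG$.

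I expect the main obstacle to lie in the stable birational equivalence between $BG$ and $S$: specifically, in checking that $Q$ being special makes the first fibration birationally trivial, that the second fibration is genuinely (an open subscheme of) a vector bundle over $S$, and that stable birational equivalence interacts correctly with $p$-retract rationality. The construction of the exact sequence and the final deduction are then routine.
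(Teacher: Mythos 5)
Your proof is correct and follows essentially the same route as the paper: both present $G$ as the kernel of a surjection from a quasi-split torus, obtaining $1\to G\to Q\to S\to 1$, identify the ($p$-)retract rationality of $BG$ with that of the quotient torus $S$, and then apply \Cref{local}. The only cosmetic difference is that where you reprove the no-name comparison between $BG$ and $S$ via the double fibration on $(Q\times U)/G$, the paper observes that $Q$ is an open subset of a generically free linear $G$-representation, so that $S=Q/G$ is itself a model of $BG$, and cites the no-name lemma for independence of the model.
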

	
	One might ask if the retract rationality of $BG$ is a $p$-local property, in the case when $G$ is an arbitrary linear algebraic group. A positive answer would have profound implications for the Noether problem.
	
	For example, if $A_n$ denotes the alternating group on $n$ elements, Merkurjev shows that the classifying stack $BA_n$ is $p$-retract rational for every prime $p$; see \cite[Theorem 6.1]{merkurjev2018versal}. On the other hand, retract rationality of $BA_n$ is a classical problem due to Hilbert, which remains open for every $n\geq 6$; see \cite{maeda1989noether} and \cite[\S 4.7]{colliot2007rationality}.
	
	We also give examples of tori that are not $p$-retract rational at a specified finite set of primes.
	\begin{thm}\label{Theorem 1.3}
		Let $S$ be a finite set of primes. There exists an algebraic torus $T$ such that $T$ is $p$-retract rational if and only if $p\notin S$.
	\end{thm}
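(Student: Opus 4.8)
The plan is to build $T$ out of local pieces, one for each prime in $S$, using the character-lattice criterion \Cref{character} to control $p$-retract rationality prime by prime. First I would recall that since an algebraic torus is determined by its character lattice viewed as a Galois module — equivalently, as an integral representation of a finite quotient group $\Gamma$ — the whole problem reduces to a question about $\Z[\Gamma]$-lattices. The direct product of two tori has character lattice the direct sum of the individual lattices, and $p$-retract rationality should behave well under such products: if $T = \prod_i T_i$, then $T$ ought to be $p$-retract rational exactly when each factor $T_i$ is. I would verify this multiplicativity directly from \Cref{character}, since the criterion there is phrased in terms of the lattice $\hat T$ and lattice-theoretic conditions (such as invertibility or coflabbiness of certain associated modules) typically split across direct sums.

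Granting that reduction, the core of the argument is to produce, for a single prime $p$, a torus $T_p$ that is $q$-retract rational for every prime $q \neq p$ but fails to be $p$-retract rational. Then setting $T = \prod_{p \in S} T_p$ and invoking multiplicativity gives a torus that is $p$-retract rational precisely when $p \notin S$. The natural source of such examples is the classical theory: for a finite group $G$, the lattices arising from faithful permutation-type constructions have flabby/coflabby defects concentrated at the primes dividing $|G|$, and the failure of retract rationality is detected by an invariant (the flabby class, or equivalently a class in $H^1$ of a flabby resolution) whose $p$-primary part can be made nontrivial for exactly one chosen prime. Concretely, I would take $G$ to be a cyclic or small group whose group algebra has an idempotent decomposition isolating $p$, and let $T_p$ be the torus whose character lattice is a suitable kernel or cokernel lattice (for instance, the augmentation-type lattice $I_G$ or a Chevalley-module variant over $\Q(\zeta)$), chosen so that the obstruction to $p$-retract rationality predicted by \Cref{character} is a group killed by a power of $p$ and nonzero.

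The main obstacle is the second step: exhibiting a single lattice whose \Cref{character}-obstruction is nonzero at $p$ and zero at every other prime. This is where the real content lies, because one must compute the relevant lattice-cohomological invariant and show it is $p$-local and nontrivial. I expect the verification that the obstruction \emph{vanishes} at all $q \neq p$ to follow from a general principle — away from $|G|$, the relevant lattices are often invertible or the defect is $p$-torsion by Maschke-type semisimplicity of $\Z_{(q)}[G]$ — whereas the nonvanishing at $p$ will require an explicit construction and computation, likely reusing a known non-retract-rational torus from the literature (e.g.\ a norm-one torus $R^{(1)}_{L/k}\G_m$ for a carefully chosen extension $L/k$ with Galois group having $p$-torsion in the relevant Tate cohomology).

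Once the single-prime building block is in hand, assembling the final torus and reading off its $p$-retract rationality from multiplicativity is routine, so I would present those steps tersely and devote the bulk of the argument to constructing $T_p$ and computing its obstruction via \Cref{character}.
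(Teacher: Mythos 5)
Your overall strategy (reduce to lattices via \Cref{character}, build one bad torus per prime, take a product) is viable, and the multiplicativity you want to assume is indeed easy: the flasque class of a direct sum is the direct sum of the flasque classes, and a direct summand of a $p$-invertible lattice is $p$-invertible, so $\prod T_i$ is $p$-retract rational iff each $T_i$ is. But the proposal has a genuine gap exactly where you say the real content lies: you never actually produce the single-prime building block $T_p$, nor the criterion that certifies it. Saying the obstruction ``can be made nontrivial for exactly one chosen prime'' via ``a carefully chosen extension with Galois group having $p$-torsion in the relevant Tate cohomology'' is not a construction; the key missing statement is that for a norm-one torus $R^{(1)}_{L/k}(\G_m)$ with $G=\on{Gal}(L/k)$, $p$-retract rationality holds if and only if the $p$-Sylow subgroup $G_p$ is cyclic. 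This is the paper's \Cref{norminv}, and it is not a routine computation from scratch: it requires the Sylow reduction (\Cref{sylow}, i.e.\ $p$-invertibility over $G$ is equivalent to honest invertibility over $G_p$) combined with the theorem of Colliot-Th\'el\`ene--Sansuc that $[J_G]^{\on{fl}}$ is invertible iff all Sylow subgroups of $G$ are cyclic. Without citing or reproving that input, the nonvanishing of the obstruction at $p$ is unestablished.

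Once that criterion is in hand, your plan closes up: take $G_p=(\Z/p\Z)^2$ for each $p\in S$ and multiply the resulting norm-one tori. Note, though, that the paper sidesteps the product of tori entirely by taking a \emph{single} Galois extension $L/k$ with group $\prod_{p\in S}(\Z/p\Z)^2$ and letting $T=R^{(1)}_{L/k}(\G_m)$; the $q$-Sylow subgroup is then non-cyclic precisely for $q\in S$, so one application of the criterion finishes the proof. Your vanishing argument at primes $q\nmid|G|$ (Maschke over $\Z_{(q)}[G]$, i.e.\ \Cref{firstprop}\ref{firstprop1b}) is correct but is subsumed by the Sylow criterion, since the $q$-Sylow is trivial, hence cyclic, for such $q$.
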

	We note that any unirational $k$-variety $X$ (in particular, any algebraic torus) is $p$-retract rational for all but at most finitely primes $p$. If $X$ is unirational, one can find a generically finite dominant rational map $\phi:\A^n\dashrightarrow X$ of some degree $d\geq 1$; see \cite[Lemma 1]{roquette1964isomorphisms} for infinite $k$, and \cite{ohm1984subfields} for arbitrary $k$. If $U\c \A^n$ is a non-empty open subset such that $\phi$ is defined over $U$ and is finite of degree $d$, the diagram
	\[
	\begin{tikzcd}
	& U \arrow[d]  \\
	\A^n \arrow[ur, hookleftarrow]\arrow[r, dashrightarrow,"\phi"] & X  
	\end{tikzcd}
	\]\noindent
	shows that $X$ is $p$-retract rational for every prime $p$ not dividing $d$.
	In the case of tori, the assertion also follows from \Cref{firstprop}\ref{firstprop1b} and \Cref{character}.
	
	The rest of the paper is structured as follows. In \Cref{sec2} we define $p$-invertible lattices and establish their basic properties. We use this notion to characterize $p$-retract rationality of tori in \Cref{sec3}, and to prove Theorem \ref{local} and Corollary \ref{bglocal} in \Cref{sec4}. In \Cref{sec5} we prove \Cref{Theorem 1.3}, and deduce a consequence for the Noether problem in \Cref{stackspecified}. In the Appendix, we insert a proof of an auxiliary result due to Colliot-Th\'el\`ene but so far unpublished.
	
	\section{$p$-invertible lattices}\label{sec2}
	
	Throughout this paper we will denote by $p$ a prime number. If $T$ is a torus over a field $k$, we denote by $\hat{T}$ its character lattice. If $M$ is a lattice, we set $M_{(p)}:=M\otimes_{\Z}\Z_{(p)}$, where $\Z_{(p)}$ is the localization of $\Z$ at the prime ideal $(p)$. If $d$ is an integer, we denote by $\phi_{M,d}:M\to M$ 
	the endomorphism of multiplication by $d$. 
	
	Recall that, if $G$ is a profinite group, a $G$-lattice $M$ is defined to be invertible (or permutation projective) if it is a direct summand of a permutation $G$-lattice.
	\begin{defin}\label{pproj}
		Let $G$ be a profinite group and let $M$ be a $G$-lattice. We say that $M$ is $p$-invertible (or $p$-permutation projective) if there exists a permutation $G$-lattice $P$ such that $M_{(p)}$ is a direct summand of $P_{(p)}$. In other words, $M$ is $p$-invertible if there exists a commutative diagram of $G$-lattices
		\begin{equation}\label{pprojdiag1}
		\begin{tikzcd}
		M \arrow[r,"\iota"] \arrow[dr,swap,"\phi_{M,d}"]& P \arrow[d,"\pi"]\\
		& M 
		\end{tikzcd}
		\end{equation}\noindent 
		for some $d$ not divisible by $p$.
	\end{defin}
	It follows from the definition that $\iota$ is injective. It is clear that if $M$ is invertible, it is $p$-invertible for every prime $p$.
	
	If $G$ is a profinite group and $M$ is a $G$-lattice, the $G$-action on $M$ factors through a finite quotient $G'$. The next lemma assures us that $M$ is $p$-invertible as a $G$-lattice if and only if it is $p$-invertible as a $G'$-lattice.
	
	\begin{lemma}
		Let $G$ be a profinite group and $M$ be a $G$-lattice. Let $H$ be a closed subgroup of $G$ acting trivially on $M$. Then $M$ is $p$-invertible as a $G$-lattice if and only if it is $p$-invertible as a $G/H$-lattice.
	\end{lemma}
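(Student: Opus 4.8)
The plan is to prove the two implications separately: the reverse one by inflation and the forward one by passing to $H$-invariants. Throughout I take $H$ to be normal in $G$, as is implicit in the statement (otherwise $G/H$ would not be a group and the phrase ``$G/H$-lattice'' would be meaningless); write $q\colon G\to G/H$ for the quotient map. Since $H$ acts trivially on $M$, the $G$-action on $M$ is inflated from a $G/H$-action, so both notions of $p$-invertibility are meaningful.

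First I would dispose of the easy direction: if $M$ is $p$-invertible as a $G/H$-lattice, then it is $p$-invertible as a $G$-lattice. Given a diagram \eqref{pprojdiag1} over $G/H$ with permutation $G/H$-lattice $P$ and $d$ prime to $p$, I inflate everything along $q$. The key point is that inflation carries permutation $G/H$-lattices to permutation $G$-lattices: a transitive summand $\Z[(G/H)/\bar U]$ becomes $\Z[G/q^{-1}(\bar U)]$, and $q^{-1}(\bar U)$ is an open subgroup of $G$. As the $q$-equivariance of $\iota$ and $\pi$ is preserved and $\phi_{M,d}$ is unchanged, the inflated diagram exhibits the $p$-invertibility of $M$ over $G$.

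The substance is the forward direction. Suppose $M$ is $p$-invertible as a $G$-lattice, witnessed by $\iota\colon M\to P$ and $\pi\colon P\to M$ with $\pi\iota=\phi_{M,d}$, $p\nmid d$, and $P$ a permutation $G$-lattice. The idea is to apply the $H$-invariants functor $(-)^H$. Since $H$ acts trivially on $M$, one has $h\cdot\iota(m)=\iota(hm)=\iota(m)$, so the image of $\iota$ lies in $P^H$ and $\iota$ factors as a map $M\to P^H$; restricting $\pi$ gives $P^H\to M$. Because $H$ is normal, $P^H$ is a $G$-sublattice of $P$ on which $H$ acts trivially, hence a $G/H$-lattice, and the two restricted maps are then morphisms of $G/H$-lattices whose composite is still $\phi_{M,d}$. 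It remains to check that $P^H$ is again a permutation $G/H$-lattice; this is the one computation I expect to be the crux of the argument.

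To carry out that computation I would reduce to a single transitive summand $P=\Z[G/S]$. The invariants $P^H$ have $\Z$-basis the orbit sums $e_O=\sum_{x\in O}x$, indexed by the finitely many $H$-orbits $O$ on the $G$-set $G/S$. Since $H$ is normal, $G$ permutes these orbits, with $g\cdot O$ again an $H$-orbit and $g\cdot e_O=e_{gO}$, so $\{e_O\}$ is permuted by the induced $G/H$-action. Hence $P^H$ is the permutation $G/H$-lattice on the set of $H$-orbits in $G/S$, and for general $P$ one passes to direct sums. Assembling these observations completes the forward direction and with it the lemma. The main obstacle, as noted, is precisely this identification of $P^H$ as a permutation lattice; the remainder is a matter of verifying equivariance and tracking the unchanged factor $d$.
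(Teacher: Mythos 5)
Your proof is correct and follows essentially the same route as the paper: one direction by passing to $H$-invariants of the factoring diagram, the other by inflation along $G\to G/H$. The only difference is that the paper outsources the two key facts about permutation lattices (that $P^H$ is a permutation $G/H$-lattice, and that the inflation of a permutation $G/H$-lattice is a permutation $G$-lattice) to \cite[Lemme 2(i),(iv)]{colliot1977r}, whereas you verify them directly via orbit sums; both verifications are correct.
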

	
	\begin{proof}
		Assume that $\phi_{M,d}$ factors through a permutation $G$-lattice $P$ for some $d$ not divisible by $p$. By \cite[Lemme 2(i)]{colliot1977r}, $P^H$ is a permutation $G/H$-lattice. Since $\phi_{M,d}$ factors through $P^H$, $M$ is $p$-invertible as a $G/H$-lattice.
		
		Conversely, assume that $\phi_{M,d}$ factors through a permutation $G/H$-lattice $P$ for some $d$ not divisible by $p$. By \cite[Lemme 2(iv)]{colliot1977r}, $P$ is also a permutation $G$-lattice, so $M$ is $p$-invertible as a $G$-lattice as well.
	\end{proof}
	
	\begin{lemma}\label{firstprop}
		Let $G$ be a finite group and let $M$ be a $G$-lattice.
		\begin{enumerate}[label=(\alph*)]
			\item\label{firstprop1b} If $p$ is prime and $p\nmid |G|$, then $M$ is $p$-invertible.
			\item\label{firstprop2} If $M$ is $p$-invertible for every prime $p$, then $M$ is invertible.
			\item\label{firstprop3} Assume that there exists a diagram 
			\begin{equation}\label{pprojdiag2}
			\begin{tikzcd}
			M \arrow[r,"\iota"] \arrow[dr,swap,"\phi"]& P \arrow[d,"\pi"]\\
			& M' 
			\end{tikzcd}
			\end{equation}\noindent 
			where $P$ is a permutation $G$-lattice, $\phi$ is injective and $\on{coker}\phi$ is finite of order not divisible by $p$. Then $M$ is $p$-invertible.
			\item\label{firstprop4} Let $M$ and $N$ be two stably equivalent $G$-lattices, and assume that $M$ is $p$-invertible. Then $N$ is $p$-invertible.
		\end{enumerate}
	\end{lemma}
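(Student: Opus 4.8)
The plan is to handle the four parts in order, treating \ref{firstprop2} as the conceptual core and obtaining the others from explicit constructions. For \ref{firstprop1b} I would run the standard norm argument. Let $M_0$ denote the underlying abelian group of $M$ with the trivial $G$-action, so that $P:=\Z[G]\otimes_\Z M_0$ is a permutation $G$-lattice (a copy of $\Z[G]$ for each element of a $\Z$-basis of $M$). Define $\pi\colon P\to M$ by $g\otimes m\mapsto gm$ and $\iota\colon M\to P$ by $m\mapsto\sum_{g\in G}g\otimes g^{-1}m$; both are $G$-equivariant, and a direct computation gives $\pi\circ\iota=\phi_{M,|G|}$. Since $p\nmid|G|$, this diagram witnesses the $p$-invertibility of $M$.

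For \ref{firstprop2}, the key observation I would make is that
\[
I_M:=\{\,d\in\Z : \phi_{M,d}\ \text{factors through a permutation }G\text{-lattice}\,\}
\]
is an ideal of $\Z$. Closure under multiplication by $\Z$ follows by rescaling $\pi$, and closure under addition by passing to the direct sum of two witnessing permutation lattices: if $\pi\iota=\phi_{M,d}$ and $\pi'\iota'=\phi_{M,e}$, then the maps $(\iota,\iota')\colon M\to P\oplus P'$ and $(x,x')\mapsto\pi(x)+\pi'(x')$ compose to $\phi_{M,d+e}$. By part \ref{firstprop1b} we have $|G|\in I_M$, so $I_M=(m)$ for a unique integer $m\geq 1$ dividing $|G|$. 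By \Cref{pproj}, $M$ is $p$-invertible exactly when $I_M$ contains an integer prime to $p$, i.e.\ exactly when $p\nmid m$; and $M$ is invertible exactly when $\phi_{M,1}$ factors through a permutation lattice, i.e.\ when $1\in I_M$, i.e.\ when $m=1$. Hence $M$ is $p$-invertible for every prime $p$ iff $m$ has no prime factors iff $m=1$ iff $M$ is invertible. This reduction of $p$-invertibility for all $p$ to a single numerical invariant $m$ is the main point; the step I expect to require the most care is verifying that $I_M$ is genuinely closed under addition.

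For \ref{firstprop3}, since $\on{coker}\phi$ is finite of order $N$ with $p\nmid N$, the map $\phi\otimes_\Z\Q\colon M\otimes_\Z\Q\to M'\otimes_\Z\Q$ is an isomorphism and $N$ annihilates $\on{coker}\phi$; thus $\psi:=N\cdot(\phi\otimes_\Z\Q)^{-1}$ carries $M'$ into $M$ (because $Nx\in\phi(M)$ for $x\in M'$), is $G$-equivariant, and satisfies $\psi\circ\phi=\phi_{M,N}$. Then $(\psi\circ\pi)\circ\iota=\psi\circ\phi=\phi_{M,N}$ exhibits $\phi_{M,N}$ as factoring through the permutation lattice $P$, and since $p\nmid N$ this shows $M$ is $p$-invertible. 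Finally, for \ref{firstprop4} I would use the characterization of $p$-invertibility as ``$M_{(p)}$ is a direct summand of $P_{(p)}$ for some permutation lattice $P$'', which has two immediate closure properties: a direct summand of a $p$-invertible lattice is $p$-invertible, and the direct sum of a $p$-invertible lattice with a permutation lattice is $p$-invertible. Writing the stable equivalence as $M\oplus Q_1\cong N\oplus Q_2$ with $Q_1,Q_2$ permutation lattices, the lattice $M\oplus Q_1$ is $p$-invertible, hence so is $N\oplus Q_2$, and therefore so is its direct summand $N$.
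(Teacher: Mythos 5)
Your proof is correct and follows essentially the same route as the paper's: part (a) is the norm/averaging argument (you just write out the induced module and the explicit splitting rather than citing Maschke), part (b) packages the paper's direct-sum-of-witnesses plus B\'ezout step into the statement that the multipliers $d$ form an ideal of $\Z$, part (c) is the same clearing-of-denominators via the inverse of $\phi\otimes\Q$ scaled by the order of the cokernel, and part (d) is the same manipulation of direct summands of localized permutation lattices. No gaps.
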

	Recall that two $G$-lattices $M$ and $N$ are said to be stably equivalent if there exist permutation $G$-lattices $P$ and $Q$ such that $M\oplus P\cong N\oplus Q$.
	\begin{proof}
		\ref{firstprop1b}. Fix an embedding $\iota:M\hookrightarrow P$, where $P$ is a permutation $G$-lattice. The standard proof of Maschke's Theorem shows the existence of a homomorphism of $\Q [G]$-modules $\pi:P_{\Q}\to M_{\Q}$ such that $\pi\circ\iota=\on{id}_{M}$ with only $|G|$ in the denominator. In other words, $\pi':=\phi_{M,|G|}\circ \pi$ is a well defined homomorphism $P\to M$. Hence $\phi_{M,|G|}=\pi'\circ\iota$ factors through $P$. If $p\nmid |G|$, this shows that $M$ is $p$-invertible.
		
		\ref{firstprop2}. By assumption, for every prime $p$ there exist an integer $d_p$ not divisible by $p$, a permutation $G$-lattice $P_p$ and homomorphisms $f_p:M\to P_p$, $g_p:P_p\to M$ such that $\phi_{M,d_p}=g_p\circ f_p$. The $d_p$ are coprime, hence we may write $1=\sum_{i=1}^ra_id_{p_i}$ for some integers $a_i$. Letting $P:=\oplus_{i=1}^rP_{p_i}$, we have that $\on{id}_M=g\circ f$, where $f=(f_{p_i}):M\to P$ and $g=\sum a_ig_{p_i}:P\to M$.
		
		
		\ref{firstprop3}. The map $\phi_{(p)}:M_{(p)}\to M'_{(p)}$ is an isomorphism, and $\on{id}_{M_{(p)}}=\phi^{-1}_{(p)}\circ\phi_{(p)}$ factors through $P_{(p)}$. Clearing denominators, this gives a $d\in \Z$ not divisible by $p$ such that $\phi_{M,d}$ factors through $P$. 
		
		\ref{firstprop4}. By assumption, $M\oplus P\cong N\oplus Q$ for some permutation $G$-lattices $P$ and $Q$. There exists a permutation $G$-lattice $R$ such that $M_{(p)}$ is a direct summand of $R_{(p)}$, hence $N_{(p)}\oplus Q_{(p)}\cong M_{(p)}\oplus P_{(p)}$ is a direct summand of $(R\oplus P)_{(p)}$, so $N_{(p)}$ is also a summand of $(R\oplus P)_{(p)}$.
	\end{proof}

	If $G$ is a finite group and $M$ is a $G$-module, for every integer $i$ we denote by $\hat{H}^i(G,M)$ the Tate cohomology group of degree $i$; see \cite[\S 2.5]{lorenz2006multiplicative} or \cite[Chapter VI]{brown1994cohomology}.
	
	\begin{lemma}\label{ext}
		Let $M$ be a $p$-invertible $G$-lattice. Then:
		\begin{enumerate}[label=(\alph*)]
			\item\label{ext1} $\hat{H}^i(G',M)_{(p)}=0$ for every subgroup $G'$ of $G$, $i=\pm 1$;
			\item\label{ext2} if $F$ is a flasque $G$-lattice, then $\on{Ext}_G^1(F,M)_{(p)}=0$.
		\end{enumerate} 
	\end{lemma}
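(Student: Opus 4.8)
Both parts have the same shape, so the plan is to exploit the factorization $\phi_{M,d}=\pi\circ\iota$ of diagram \eqref{pprojdiag1} through a permutation $G$-lattice $P$ with $p\nmid d$, and to feed it through an additive functor that annihilates permutation lattices. The underlying principle I will isolate first is this: if $\mc{F}$ is an additive functor from $G$-lattices to abelian groups with $\mc{F}(P)=0$, then $\mc{F}(\phi_{M,d})=\mc{F}(\pi)\circ\mc{F}(\iota)$ factors through $\mc{F}(P)=0$ and so is the zero map; on the other hand, since $\phi_{M,d}=d\cdot\on{id}_M$ and $\mc{F}$ is additive, $\mc{F}(\phi_{M,d})$ is multiplication by $d$ on $\mc{F}(M)$. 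Hence $d$ annihilates $\mc{F}(M)$. Tensoring with $\Z_{(p)}$ and using $d\in\Z_{(p)}^{\times}$, multiplication by $d$ is simultaneously an automorphism of $\mc{F}(M)_{(p)}$ and zero, forcing $\mc{F}(M)_{(p)}=0$.

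For \ref{ext1} I would fix a subgroup $G'\le G$ and apply $\mc{F}=\hat{H}^i(G',-)$ for $i=\pm 1$. The factorization of $\phi_{M,d}$ is a factorization of $G$-maps, hence of $G'$-maps after restriction, and $\on{Res}_{G'}P$ is again a permutation lattice; so the only input needed is $\hat{H}^{\pm 1}(G',P)=0$ for every permutation lattice $P$. Writing $P$ as a sum of $\Z[G/H]$ and decomposing $\on{Res}_{G'}\Z[G/H]$ into $G'$-orbits indexed by the double cosets $G'\backslash G/H$, Shapiro's lemma reduces the vanishing to $\hat{H}^{\pm 1}(K,\Z)=0$ for the stabilizers $K=G'\cap gHg^{-1}$; and indeed $\hat{H}^{1}(K,\Z)=\on{Hom}(K,\Z)=0$, while $\hat{H}^{-1}(K,\Z)=0$ because $\Z$ is torsion-free. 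The general principle above then gives the claim.

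For \ref{ext2} I would apply $\mc{F}=\on{Ext}^1_G(F,-)$, so that the only input required is $\on{Ext}^1_G(F,P)=0$ for a flasque lattice $F$ and a permutation lattice $P$. By Shapiro's lemma for $\on{Ext}$ together with $\Z[G/H]=\on{Coind}_H^G\Z$, one has $\on{Ext}^1_G(F,\Z[G/H])=\on{Ext}^1_H(F,\Z)$; since $F$ is $\Z$-free, $\on{Hom}_\Z(F,-)$ is exact and this group is identified with $H^1(H,F^\circ)$, where $F^\circ=\on{Hom}(F,\Z)$. Finally, $F$ flasque means $F^\circ$ is coflasque, that is $H^1(H,F^\circ)=0$ for every $H\le G$, so $\on{Ext}^1_G(F,P)=0$, and the general principle finishes the argument.

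The steps are all classical, so there is no genuinely hard obstacle; the points requiring care are the identification $\on{Ext}^1_H(F,\Z)\cong H^1(H,F^\circ)$ (which uses the $\Z$-freeness of $F$) and the flasque--coflasque duality, both of which I would cite from Colliot-Th\'el\`ene--Sansuc. As an alternative I would mention the summand route: since $M_{(p)}$ is a direct summand of $P_{(p)}$, the group $\mc{F}(M)_{(p)}$ is a direct summand of $\mc{F}(P)_{(p)}=0$; this avoids bookkeeping of $d$ but requires knowing that $\hat{H}^i$ and $\on{Ext}^1_G$ commute with the flat base change $\Z\to\Z_{(p)}$.
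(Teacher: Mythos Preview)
Your argument is correct and follows essentially the same approach as the paper: both exploit that $\phi_{M,d}$ factors through a permutation lattice $P$, so the induced map on $\hat{H}^{\pm1}(G',-)$ or $\on{Ext}^1_G(F,-)$ factors through zero, while simultaneously being multiplication by $d$, forcing the $(p)$-localization to vanish. The only cosmetic difference is that the paper reaches ``multiplication by $d$ is an isomorphism on $\hat{H}^i(G',M)_{(p)}$'' via the long exact sequence of $0\to M\xrightarrow{d} M\to A\to 0$, whereas you invoke additivity of the functor directly; and you unpack the vanishing $\hat{H}^{\pm1}(G',P)=0$ and $\on{Ext}^1_G(F,P)=0$ via Shapiro, while the paper simply cites these as the flasque/coflasque property of permutation lattices and \cite[Lemme~9]{colliot1977r}.
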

	
	\begin{proof}
		\ref{ext1}. Assume that $\phi_{M,d}$ factors through the permutation $G$-lattice $P$, for some $d$ not divisible by $p$. We have a short exact sequence \[0\to M\xrightarrow{\phi_{M,d}} M\to A\to 0\] where $A$ is $d$-torsion. Consider the associated long exact sequence in cohomology
		\[\dots\to \hat{H}^{i-1}(G',A)\to \hat{H}^i(G',M)\xrightarrow{\hat{H}^i(\phi_{M,d})} \hat{H}^i(G',M)\to \hat{H}^i(G',A)\to\cdots.\] 
		Since $A$ is $d$-torsion, $\hat{H}^i(G',A)$ is also $d$-torsion for every $i$, so $\hat{H}^i(G',A)_{(p)}=0$. It follows that $\hat{H}^i(\phi_{M,d})_{(p)}$ is an isomorphism. On the other hand, $\hat{H}^i(\phi_{M,d})$ factors through $\hat{H}^i(G',P)$, which vanishes for $i=\pm 1$ because a permutation $G$-lattice is both flasque and coflasque, so $\hat{H}^i(\phi_{M,d})$ is the zero map for $i=\pm 1$. 
		
		\ref{ext2}. If $P$ is a permutation $G$-lattice, \cite[Lemme 9]{colliot1977r} shows that $\on{Ext}_G^1(F,P)=0$ for every flasque $G$-lattice $F$. The assertion $\on{Ext}_G^1(F,M)_{(p)}=0$ can now be proved using  the same argument as in part \ref{ext1}, with $H^i$ replaced by $\on{Ext}_G^1$.
	\end{proof}
	
	\begin{lemma}\label{sylow}
		Let $G$ be a finite group, $G_p$ a $p$-Sylow subgroup of $G$, and $M$ a $G$-lattice. The following are equivalent:
		\begin{enumerate}
			\item\label{sylow1} $M$ is $p$-invertible as a $G$-lattice;
			\item\label{sylow2} $M$ is $p$-invertible as a $G_p$-lattice;
			\item\label{sylow2b} $M$ is invertible as a $G_p$-lattice;
			\item\label{sylow3} $\on{Ext}^1_G(F,M)_{(p)}=0$ for every flasque $G$-lattice $F$.
		\end{enumerate}
	\end{lemma}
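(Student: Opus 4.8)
The plan is to establish the four equivalences through the implications \ref{sylow1} $\Leftrightarrow$ \ref{sylow2}, \ref{sylow2} $\Leftrightarrow$ \ref{sylow2b}, and \ref{sylow1} $\Leftrightarrow$ \ref{sylow3}, several of which are immediate. The implication \ref{sylow2b} $\Rightarrow$ \ref{sylow2} is trivial, since an invertible lattice is $p$-invertible, and \ref{sylow1} $\Rightarrow$ \ref{sylow3} is exactly \Cref{ext}\ref{ext2} applied to the group $G$. The substance is therefore in reducing $p$-invertibility to the Sylow subgroup (the reverse of \ref{sylow1} $\Leftrightarrow$ \ref{sylow2}), in upgrading $p$-invertibility to invertibility over the $p$-group $G_p$ (\ref{sylow2} $\Rightarrow$ \ref{sylow2b}), and in recovering a factorization from the Ext-vanishing (\ref{sylow3} $\Rightarrow$ \ref{sylow1}).

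For \ref{sylow1} $\Rightarrow$ \ref{sylow2} I would simply restrict scalars: if $\phi_{M,d}$ factors through a permutation $G$-lattice $P$ with $p\nmid d$, then $P$ remains a permutation $G_p$-lattice and the same factorization exhibits $M$ as $p$-invertible over $G_p$. The reverse implication \ref{sylow2} $\Rightarrow$ \ref{sylow1} is the heart of the Sylow reduction, and I would prove it by a transfer/induction argument. Writing $n=[G:G_p]$, which is coprime to $p$, there is a norm embedding $\nu\colon M\to \on{Ind}_{G_p}^G\on{Res}_{G_p}^G M$, $m\mapsto \sum_{g} g\otimes g^{-1}m$ (sum over left coset representatives), together with the counit $\epsilon\colon \on{Ind}_{G_p}^G\on{Res}_{G_p}^G M\to M$, $g\otimes m\mapsto gm$; these are $G$-maps whose composite $\epsilon\circ\nu$ equals $\phi_{M,n}$. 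Given a $G_p$-factorization $\on{Res} M\xrightarrow{\iota_0}P_0\xrightarrow{\pi_0}\on{Res} M$ of $\phi_{M,d}$ through a permutation $G_p$-lattice $P_0$, applying $\on{Ind}_{G_p}^G$ (which sends permutation lattices to permutation lattices) and pre- and post-composing with $\nu$ and $\epsilon$ yields a factorization of $\phi_{M,dn}$ through the permutation $G$-lattice $\on{Ind}_{G_p}^G P_0$. Since $p\nmid dn$, this shows $M$ is $p$-invertible over $G$.

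The equivalence \ref{sylow2} $\Leftrightarrow$ \ref{sylow2b} exploits that $G_p$ is a $p$-group, and only \ref{sylow2} $\Rightarrow$ \ref{sylow2b} needs an argument: for any prime $q\neq p$ we have $q\nmid |G_p|$, so \Cref{firstprop}\ref{firstprop1b} gives that $M$ is automatically $q$-invertible over $G_p$; combined with the hypothesis that $M$ is $p$-invertible over $G_p$, this makes $M$ $q$-invertible for every prime $q$, whence $M$ is invertible over $G_p$ by \Cref{firstprop}\ref{firstprop2}. Finally, for \ref{sylow3} $\Rightarrow$ \ref{sylow1} I would invoke the existence of a flasque resolution $0\to M\xrightarrow{\iota}P\to F\to 0$ with $P$ a permutation $G$-lattice and $F$ flasque (Colliot-Th\'el\`ene--Sansuc \cite{colliot1977r}; this is obtained by dualizing a coflasque resolution of the dual lattice). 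This extension defines a class $\xi\in\on{Ext}^1_G(F,M)$; the hypothesis \ref{sylow3} gives $\on{Ext}^1_G(F,M)_{(p)}=0$, so $\xi$ is annihilated by some integer $d$ coprime to $p$. Since multiplication by $d$ on $\on{Ext}^1_G(F,M)$ is realized as pushout along $\phi_{M,d}$, the pushed-out extension splits, and the resulting splitting composed with $\iota$ produces a map $\pi\colon P\to M$ with $\pi\circ\iota=\phi_{M,d}$, exhibiting $M$ as $p$-invertible.

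I expect the main obstacle to be the recovery of an honest factorization from the cohomological vanishing in \ref{sylow3} $\Rightarrow$ \ref{sylow1}: this step depends on having a flasque resolution available as external input and on correctly identifying integer multiplication on $\on{Ext}^1_G(F,M)$ with the pushout operation, so that the localized vanishing translates into a splitting after clearing a denominator coprime to $p$. The transfer argument in \ref{sylow2} $\Rightarrow$ \ref{sylow1} is the other point requiring care, chiefly in verifying that $\nu$ is well defined and $G$-equivariant and that $\epsilon\circ\nu=\phi_{M,n}$.
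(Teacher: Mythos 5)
Your proof is correct, but it is organized differently from the paper's. The paper establishes the single cycle (\ref{sylow2b})$\Rightarrow$(\ref{sylow3})$\Rightarrow$(\ref{sylow1})$\Rightarrow$(\ref{sylow2})$\Rightarrow$(\ref{sylow2b}): the only implication requiring real work is (\ref{sylow2b})$\Rightarrow$(\ref{sylow3}), which it proves by setting $R:=\on{Hom}_{\Z}(F,M)$, identifying $\on{Ext}^1_G(F,M)$ with $H^1(G,R)$, quoting \cite[Lemme 1]{colliot1977r} to get $H^1(G_p,R)=0$, and then using restriction--corestriction to conclude that $\on{Ext}^1_G(F,M)$ is $[G:G_p]$-torsion. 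You instead prove (\ref{sylow2})$\Rightarrow$(\ref{sylow1}) directly by an explicit transfer: the norm map $\nu\colon M\to \on{Ind}_{G_p}^G\on{Res}_{G_p}^G M$ and the counit $\epsilon$ compose to $\phi_{M,[G:G_p]}$, and inducing a $G_p$-factorization of $\phi_{M,d}$ yields a $G$-factorization of $\phi_{M,d[G:G_p]}$ through the permutation lattice $\on{Ind}_{G_p}^G P_0$; this is a valid and self-contained argument that makes the role of the index $[G:G_p]$ visible at the level of lattices rather than cohomology. Your (\ref{sylow1})$\Rightarrow$(\ref{sylow3}) correctly delegates to \Cref{ext}\ref{ext2}, and your (\ref{sylow3})$\Rightarrow$(\ref{sylow1}) is the same argument as the paper's, merely phrased via the pushout description of multiplication by $d$ on $\on{Ext}^1$ rather than by localizing the flasque resolution at $p$ and splitting it over $\Z_{(p)}[G]$; both versions require the existence of a flasque resolution as external input, as you note. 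The trade-off is that the paper's route is shorter because the restriction--corestriction step does the work of your transfer implicitly, whereas your decomposition keeps the four conditions linked by arguments that are individually more elementary and do not route everything through condition (\ref{sylow3}). The remaining implications ((\ref{sylow2b})$\Rightarrow$(\ref{sylow2}) trivially, (\ref{sylow1})$\Rightarrow$(\ref{sylow2}) by restriction, and (\ref{sylow2})$\Rightarrow$(\ref{sylow2b}) via \Cref{firstprop}\ref{firstprop1b} and \ref{firstprop2}) agree with the paper.
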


	\begin{proof}
		(\ref{sylow2b})$\Rightarrow$(\ref{sylow3}). Let $F$ be a flasque $G$-lattice, and consider the $G$-lattice $R:=\on{Hom}_{\Z}(F,M)$. By \cite[\S III, Proposition 2.2]{brown1994cohomology}, we have $\on{Ext}^1_G(F,M)=H^1(G,R)$. As $G_p$-lattices, $M$ is invertible and $F$ is flasque, so by \cite[Lemme 1]{colliot1977r} we have $H^1(G_p,R)=0$. The composition
		\[H^1(G,R)\xrightarrow{\on{Res}}H^1(G_p,R)\xrightarrow{\on{Cores}}H^1(G,R)\] is given by multiplication by $[G:G_p]$, so $\on{Ext}^1_G(F,M)$ is $[G:G_p]$-torsion. Since $G_p$ is a $p$-Sylow subgroup of $G$, $p$ does not divide $[G:G_p]$, hence $\on{Ext}^1_G(F,M)_{(p)}=0$.
		
		(\ref{sylow3})$\Rightarrow$(\ref{sylow1}). Consider a flasque resolution of the $G$-lattice $M$ \[0\to M\to P\to F\to 0.\] Recall that this means that the sequence is exact, $P$ is a permutation $G$-lattice, and $F$ is a flasque $G$-lattice. We have \[\on{Ext}^1_{\Z_{(p)}[G]}(F_{(p)},M_{(p)})\cong \on{Ext}^1_{\Z[G]}(F,M)_{(p)}=0,\] so the localized sequence \[0\to M_{(p)}\to P_{(p)}\to F_{(p)}\to 0\] splits, proving that $M_{(p)}$ is a direct summand of $P_{(p)}$. Therefore $M$ is $p$-invertible.
		
		The implication (\ref{sylow1})$\Rightarrow$(\ref{sylow2}) is obvious, and (\ref{sylow2})$\Rightarrow$(\ref{sylow2b}) follows from \Cref{firstprop}\ref{firstprop1b} and \ref{firstprop2}. 
	\end{proof}
	
	\section{The character lattice of a $p$-retract rational torus}\label{sec3}
	Let $k$ be a field, let $X$ be a normal equidimensional affine $k$-scheme with normal projective completion $\cl{X}$, and let $Z_1,\dots,Z_r$ be the irreducible components of the boundary $\cl{X}\setminus X$. Denote $\on{Div}_{\partial \cl{X}}\cl{X}:=\oplus_{i=1}^r \Z Z_i$. We have the following exact sequence:
	\begin{equation}\label{affinenormal} 0\to k'^*\to k[X]^*\to \on{Div}_{\partial \cl{X}}\cl{X}.\end{equation}
	Here $k':=k[\cl{X}]$ is the algebraic closure of $k$ in $k(X)$: it is finite-dimensional $k$-algebra. If $X$ is integral, it is a field. 
	
	If $G$ is a finite group and $M$ is a $G$-lattice, we denote by $[M]^{\on{fl}}$ the flasque class of $M$; see \cite[Lemme 5]{colliot1977r} (where it is denoted $\rho(M)$) or \cite[\S 2.7]{lorenz2006multiplicative}. If some $G$-lattice $F\in [M]^{\on{fl}}$ is invertible, or $p$-invertible, then the same is true for any $F'\in [M]^{\on{fl}}$; see \Cref{firstprop}\ref{firstprop4} for the case where $M$ is $p$-invertible. When this happens, we say that $[M]^{\on{fl}}$ is invertible, or $p$-invertible, respectively.
	
	By a theorem of Saltman \cite[Theorem 3.14(a)]{saltman1984retract}, a $k$-torus $T$ is retract rational if and only if $[\hat{T}]^{\on{fl}}$ is invertible; see also \cite[Lemma 9.5.4(b)]{lorenz2006multiplicative}. We now prove a $p$-local version of this result. In the course of the proof, we will make use of \cite[Theorem 2.3]{saltman1984retract}, hence we recall a piece of notation used there. If $G$ is a finite group and $f:M\to M'$ is a homomorphism of $G$-lattices, we write $\eta(f)=[0]$ if there exists a diagram with exact rows
	\begin{equation*}
	\begin{tikzcd}
	0 \arrow[r] & M \arrow[d, "f"] \arrow[r] & P \arrow[d] \arrow[r] & E \arrow[d,"g"] \arrow[r] & 0 \\
	0\arrow[r] & M' \arrow[r] & P' \arrow[r] & E' \arrow[r] &  0 
	\end{tikzcd}
	\end{equation*}\noindent 
	where $P$ and $P'$ are permutation $G$-lattices, $E$ and $E'$ are invertible $G$-lattices, and $g$ factors through a permutation $G$-lattice; see \cite[pp. 174-175]{saltman1984retract}. 
	
	\begin{prop}\label{character}
		Let $T$ be a torus over $k$. The following are equivalent:
		\begin{enumerate}
			\item\label{character1} $T$ is $p$-retract rational;
			\item\label{character2}  $[\hat{T}]^{\on{fl}}$ is $p$-invertible;
			\item\label{character3} there exists a commutative diagram of $\on{Gal}(k)$-lattices
			\begin{equation}\label{saltdiag}
			\begin{tikzcd}
			& \hat{T} \arrow[d, "\phi_{\hat{T},d}"] \arrow[r] & P \arrow[d] \\
			0\arrow[r] & \hat{T} \arrow[r] & M \arrow[r] & Q \arrow[r]&  0 
			\end{tikzcd}
			\end{equation}\noindent 
			where $d$ is not divisible by $p$, the bottom row is exact and $P$ and $Q$ are permutation lattices.
		\end{enumerate}
	\end{prop}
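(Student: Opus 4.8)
The plan is to localize Saltman's proof of \cite[Theorem 3.14(a)]{saltman1984retract} at the prime $p$: throughout that argument one replaces the identity map of $\hat T$ by the multiplication map $\phi_{\hat T,d}$ for an integer $d$ coprime to $p$, so that the integral statement is recovered by taking $d=1$. Concretely, I would prove the equivalence (\ref{character2})$\Leftrightarrow$(\ref{character3}) by homological algebra with flasque resolutions, and the geometric equivalence (\ref{character1})$\Leftrightarrow$(\ref{character3}) by feeding a degree-$d$ cover into \cite[Theorem 2.3]{saltman1984retract}, whose associated vanishing $\eta(\phi_{\hat T,d})=[0]$ is meant to be exactly the content of diagram \eqref{saltdiag}.

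For (\ref{character2})$\Rightarrow$(\ref{character3}) I would argue explicitly. Fix a flasque resolution $0\to\hat T\xrightarrow{\alpha}P_0\xrightarrow{\beta}S\to 0$, so that $[\hat T]^{\on{fl}}=[S]$. Since $S$ is $p$-invertible there are a permutation lattice $P_1$, maps $S\xrightarrow{s}P_1\xrightarrow{t}S$, and $d$ with $p\nmid d$ satisfying $t\circ s=\phi_{S,d}$. Pulling the resolution back along $t$ gives an exact sequence $0\to\hat T\to E\to P_1\to 0$ with $E=P_0\times_S P_1$, and the assignment $x\mapsto(dx,s\beta(x))$ defines a homomorphism $v\colon P_0\to E$ with $v\circ\alpha=\iota_E\circ\phi_{\hat T,d}$. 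Taking the top row to be $\alpha\colon\hat T\to P_0$, together with $P:=P_0$, $M:=E$ and $Q:=P_1$, then yields diagram \eqref{saltdiag}.

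The converse (\ref{character3})$\Rightarrow$(\ref{character2}) is the cleanest part. Given diagram \eqref{saltdiag}, apply $\on{Ext}^1_G(F',-)$ for an arbitrary flasque lattice $F'$. The composite $\hat T\to P\to M$ equals $\iota\circ\phi_{\hat T,d}$ and factors through the permutation lattice $P$, so by \cite[Lemme 9]{colliot1977r} the induced map $\iota_*\circ(\phi_{\hat T,d})_*=d\cdot\iota_*$ on $\on{Ext}^1_G(F',-)$ vanishes; on the other hand the exactness of the bottom row together with $\on{Ext}^1_G(F',Q)=0$ shows that $\iota_*\colon\on{Ext}^1_G(F',\hat T)\to\on{Ext}^1_G(F',M)$ is surjective. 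Hence $\on{Ext}^1_G(F',M)$ is $d$-torsion, so $\on{Ext}^1_G(F',M)_{(p)}=0$, and \Cref{sylow} gives that $M$ is $p$-invertible. By \Cref{ext}\ref{ext2} the flasque resolution of $M$ splits after localizing at $p$, so $[M]^{\on{fl}}$ is $p$-invertible; since $Q$ is a permutation lattice $[M]^{\on{fl}}=[\hat T]^{\on{fl}}$, which is therefore $p$-invertible.

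The geometric equivalence (\ref{character1})$\Leftrightarrow$(\ref{character3}) is where I expect the real difficulty. A $p$-retract structure supplies a finite dominant morphism $f$ of degree $d$ coprime to $p$ together with a rational factorization through affine space; running Saltman's argument, the projection formula $f_*f^*=\deg f$ turns this into the statement that $\phi_{\hat T,d}$, rather than the identity, fits into the diagram produced by \cite[Theorem 2.3]{saltman1984retract}, i.e.\ $\eta(\phi_{\hat T,d})=[0]$, which unwinds to \eqref{saltdiag}. Conversely one must realize diagram \eqref{saltdiag} by an actual degree-$d$ cover factoring through affine space. The delicate point, and the main obstacle, is to track the degree through the dictionary between tori and their character lattices and through Saltman's $\eta$-calculus, verifying that the multiplicity introduced is exactly $d$ and remains coprime to $p$.
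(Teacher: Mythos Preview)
Your argument for (\ref{character2})$\Leftrightarrow$(\ref{character3}) is correct and in fact more self-contained than the paper's. Where the paper invokes \cite[Theorem~2.3]{saltman1984retract} as a black box in both directions, you build diagram~\eqref{saltdiag} directly via a pullback for (\ref{character2})$\Rightarrow$(\ref{character3}) and use functoriality of $\on{Ext}^1_G(F',-)$ together with the long exact sequence for the converse. Two small remarks: once you have $\on{Ext}^1_G(F',M)_{(p)}=0$ for every flasque $F'$, you may apply this directly to the flasque quotient in a resolution of $M$ and skip the detour through \Cref{sylow} and \Cref{ext}\ref{ext2}; and the equality $[M]^{\on{fl}}=[\hat T]^{\on{fl}}$ deserves a sentence of justification (push a flasque resolution of $\hat T$ out along $\hat T\hookrightarrow M$ and use $\on{Ext}^1_G(Q,P)=0$ to see the pushout is again permutation). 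The payoff of your route is that it isolates exactly which homological facts are used, while the paper's appeal to Saltman's $\eta$-calculus packages the same manipulations but hides them inside the cited theorem.

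For (\ref{character1})$\Leftrightarrow$(\ref{character3}) your plan coincides with the paper's. The direction (\ref{character3})$\Rightarrow$(\ref{character1}) is exactly as you outline: \cite[Theorem~2.3]{saltman1984retract} gives $\eta(\phi_{\hat T,d})=[0]$, and \cite[Theorem~3.14(b)]{saltman1984retract} then says the $d$-th power map $T\to T$ factors rationally through affine space, exhibiting $T$ as $p$-retract rational. For (\ref{character1})$\Rightarrow$(\ref{character3}) the paper carries out precisely what you call ``running Saltman's argument with the projection formula'': given a finite dominant $\phi\colon X\to T$ of degree $d$ and a rational map $X\dashrightarrow\A^n$, it works with the unit lattices $L[T]^*/L^*=\hat T$, $L[X]^*/(k'\otimes L)^*$, and $L[x_1,\dots,x_n][1/w]^*/L^*$ (the last one visibly permutation), and uses the norm identity $\psi_*\psi^*=\phi_{\hat T,d}$---your projection formula---to assemble \eqref{saltdiag} by hand. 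The degree-tracking you flag as the obstacle is handled by keeping the norm map explicit throughout; no extra multiplicity enters, so $d$ remains prime to $p$. Note that one cannot simply quote \cite[Theorem~3.14(b)]{saltman1984retract} in reverse here, since the given cover $X\to T$ need not be the $d$-th power map; this is why the paper (and your plan) rebuilds the lattice diagram from scratch.
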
	
	
	\begin{proof}
		Let $L$ be a splitting field of $T$, and set $G:=\on{Gal}(L/k)$.
		
		(\ref{character2})$\Rightarrow$(\ref{character3}). Assume that the $G$-lattice $[\hat{T}]^{\on{fl}}$ is $p$-invertible, and choose a flasque resolution $0\to \hat{T}\to P\to E\to 0$ of $\hat{T}$, where $E$ is $p$-invertible. There exists $d\in \Z$ not divisible by $p$ such that $\phi_{E,d}$ factors through a permutation $G$-lattice $Q$. The diagram
		\begin{equation*}
		\begin{tikzcd}
		0 \arrow[r] & \hat{T} \arrow[d, "\phi_{\hat{T},d}"] \arrow[r] & P \arrow[d,"\phi_{P,d}"] \arrow[r] & E \arrow[d,"\phi_{E,d}"] \arrow[r] & 0 \\
		0\arrow[r] & \hat{T} \arrow[r] & P \arrow[r] & E \arrow[r] &  0 
		\end{tikzcd}
		\end{equation*}\noindent
		then shows that $\eta(\phi_{\hat{T},d})=[0]$. By \cite[Theorem 2.3]{saltman1984retract}, a diagram of type (\ref{saltdiag}) exists.

		(\ref{character3})$\Rightarrow$(\ref{character2}). If we have a diagram of type (\ref{saltdiag}), then by \cite[Theorem 2.3]{saltman1984retract} we have $\eta(\phi_{\hat{T},d})=[0]$. This means that there exists a diagram
		\begin{equation}\label{diagramma1}
		\begin{tikzcd}
		0\arrow[r] & \hat{T} \arrow[r,"\iota_1"] \arrow[d, "\phi_{\hat{T},d}"] & P_1 \arrow[r,"\pi_1"] \arrow[d,"\rho_1"]  & E_1 \arrow[r] \arrow[d,"\psi_1"] &  0 \\
		0\arrow[r] & \hat{T} \arrow[r,"\iota_2"] & P_2 \arrow[r,"\pi_2"] & E_2 \arrow[r]&  0 
		\end{tikzcd}
		\end{equation} 
		where the rows are flasque resolutions, and $\psi_1$ factors through a permutation $G$-lattice $Q'$. By \cite[Lemme 4]{colliot1977r}, $\iota_1$ factors through $P_2$, so we have a diagram
		\begin{equation}\label{diagramma2}
		\begin{tikzcd}
		0\arrow[r] & \hat{T} \arrow[r,"\iota_2"] \arrow[d,equal] & P_2 \arrow[r,"\pi_2"] \arrow[d,"\rho_2"]  & E_2 \arrow[r] \arrow[d,"\psi_2"] &  0 \\
		0\arrow[r] & \hat{T} \arrow[r,"\iota_1"] & P_1 \arrow[r,"\pi_1"] & E_1 \arrow[r]&  0 
		\end{tikzcd}
		\end{equation} 
		Let $\rho:=\rho_2\circ\rho_1$ and $\psi:=\psi_2\circ \psi_1$. The bottom row of (\ref{diagramma1}) and the top row of (\ref{diagramma2}) coincide. Joining the two previous diagrams along said rows, we obtain 
		\begin{equation*}
		\begin{tikzcd}
		0\arrow[r] & \hat{T} \arrow[r,"\iota_1"] \arrow[d, "\phi_{\hat{T},d}"] & P_1 \arrow[r,"\pi_1"] \arrow[d,"\rho"]  & E_1 \arrow[r] \arrow[d,"\psi"] &  0 \\
		0\arrow[r] & \hat{T} \arrow[r,"\iota_1"] & P_1 \arrow[r,"\pi_1"] & E_1 \arrow[r]&  0. 
		\end{tikzcd}
		\end{equation*} 	
		We also have the following diagram:
		\begin{equation*}
		\begin{tikzcd}
		0\arrow[r] & \hat{T} \arrow[r,"\iota_1"] \arrow[d, "\phi_{\hat{T},d}"] & P_1 \arrow[r,"\pi_1"] \arrow[d,"\phi_{P_1,d}"]  & E_1 \arrow[r] \arrow[d,"\phi_{E_1,d}"] &  0 \\
		0\arrow[r] & \hat{T} \arrow[r,"\iota_1"] & P_1 \arrow[r,"\pi_1"] & E_1 \arrow[r]&  0. 
		\end{tikzcd}
		\end{equation*} 
		Combining these two diagrams, it is immediate to deduce that $\phi_{P_1,d}-\rho$ is zero on $\on{im}\iota_1$, and that the square
		\begin{equation*}
		\begin{tikzcd}
		P_1 \arrow[r,"\pi_1"] \arrow[d,swap,"\phi_{P_1,d}-\rho"]  & E_1 \arrow[d,"\phi_{E_1,d}-\psi"]\\
		P_1 \arrow[r,"\pi_1"] & E_1 
		\end{tikzcd}
		\end{equation*} 
		commutes. Since $\phi_{P_1,d}-\rho$ is zero on $\on{im}\iota_1=\on{ker}\pi_1$, there exists $f:E_1\to P_1$ making the diagram
		\begin{equation*}
		\begin{tikzcd}
		P_1 \arrow[r,"\pi_1"] \arrow[d,swap,"\phi_{P_1,d}-\rho"]  & E_1 \arrow[d,"\phi_{E_1,d}-\psi"] \arrow[dl,swap,"f"]\\
		P_1 \arrow[r,"\pi_1"] & E_1 
		\end{tikzcd}
		\end{equation*} 
		commute.
		
		Recall that $\psi_1$ factors through a permutation $G$-lattice $Q'$. Since $\psi$ factors through $\psi_1$, it also factors through $Q'$.
		We thus have commutative triangles
		\begin{equation*}
		\begin{tikzcd}
		E_1 \arrow[r] \arrow[dr,swap,"\psi"]& Q' \arrow[d]\\
		& E_1 \end{tikzcd}\qquad
		\begin{tikzcd}
		E_1 \arrow[r,"f"] \arrow[dr,swap,"\phi_{E_1,d}-\psi"]& P_1 \arrow[d,"\pi_1"]\\
		& E_1. 
		\end{tikzcd}
		\end{equation*}\noindent 
		Therefore $\phi_{E_1,d}=(\phi_{E_1,d}-\psi)+\psi$ factors through $P_1\oplus Q'$, which implies that $E_1\in[\hat{T}]^{\on{fl}}$ is $p$-invertible.

		(\ref{character3})$\Rightarrow$(\ref{character1}). By \cite[Theorem 2.3]{saltman1984retract}, the existence of a diagram of $G$-lattices of type (\ref{saltdiag}) implies that $\eta(\phi_{\hat{T},d})=[0]$. By \cite[Theorem 3.14(b)]{saltman1984retract}, this implies that the ring homomorphism $f_d:L[\hat{T}]^G\to L[\hat{T}]^G$ induced by $\phi_{\hat{T},d}$ factors rationally, that is, there exist an integer $n\geq 1$, a non-zero polynomial $w\in k[x_1,\dots,x_n]$, a non-zero $u\in L[\hat{T}]^G$, and maps $\alpha:L[\hat{T}]^G\to k[x_1,\dots,x_n][1/w]$ and $\beta:k[x_1,\dots,x_n][1/w]\to L[\hat{T}]^G[1/u]$ such that $f_d=\beta\circ\alpha$; see \cite[Definition 3.2]{saltman1984retract}. Geometrically, this means that the map $T\to T$ given by $t\mapsto t^d$ factors rationally through $\A^n$. Since the degree of this map is coprime to $p$, we conclude that $T$ is $p$-retract rational.
		
		(\ref{character1})$\Rightarrow$(\ref{character3}). Assume that we are given a diagram
		\[
		\begin{tikzcd}
		& X \arrow[d, "\phi"] \arrow[dl,"s"] \\
		\A^n \arrow[r, dashrightarrow, "\pi"] & T  
		\end{tikzcd}
		\]\noindent
		where $X$ is a variety and $\phi$ is a finite dominant map of degree $d$ not divisible by $p$. We have an induced map $\pi^*:k[T]\to k[x_1,\dots,x_n][1/w]$, where $w\neq 0$ is a polynomial in $x_1,\dots,x_n$. 
		
		Let $\psi:={\phi}_L:{X}_L\to {T}_L$. We have a $G$-equivariant homomorphism $\psi^*:L[T]^*\to L[X]^*$ and a $G$-equivariant norm map $\psi_*:L[X]^*\to L[T]^*$, induced by the norm map of field theory $L(X)^*\to L(T)^*$. The composition $\psi_*\circ \psi^*$ is given by $s\mapsto s^d$. 
		
		We have $L[T]^*/L^*=\hat{T}$. Let $k'$ be the algebraic closure of $k$ in $k(X)$: it is a finite extension of $k$. By (\ref{affinenormal}), $N':=L[X]^*/(k'\otimes L)^*$ is a $G$-sublattice of $\on{Div}_{\partial \cl{X}_L}\cl{X}_L$, where $\cl{X}$ is a normal projective compactification of $X$. We obtain the following commutative diagrams of $G$-equivariant maps with exact rows:
		\[
		\begin{tikzcd}
		0 \arrow[r] & L^*\arrow[r] \arrow[d] & L[T]^*\arrow[r] \arrow[d, "\psi^*"]  & \hat{T} \arrow[d, "\psi^*"]\arrow[r] & 0 \\
		0 \arrow[r] & (k'\otimes L)^*\arrow[r] & L[X]^*\arrow[r] & N' \arrow[r] & 0 
		\end{tikzcd}
		\]\noindent
		and
		\[
		\begin{tikzcd}
		0 \arrow[r] & L^*\arrow[r] & L[T]^*\arrow[r]   & \hat{T} \arrow[r] & 0 \\
		0 \arrow[r] & (k'\otimes L)^*\arrow[r] \arrow[u] & L[X]^*\arrow[r] \arrow[u, "\psi_*"] & N' \arrow[u, "\psi_*"] \arrow[r] & 0.
		\end{tikzcd}
		\]\noindent
		The composition $\psi_*\circ \psi^*:\hat{T}\to \hat{T}$ equals $\phi_{\hat{T},d}$.
		Let $P:=L[x_1,\dots,x_n][1/w]^*/L^*$. Write $w=c\cdot w_1^{a_1}\cdot\ldots\cdot w_r^{a_r}$, where $c\in L^*$, the $w_i$ are irreducible polynomials in $L[x_1,\dots,x_n]$, and $w_i$ is not a scalar multiple of $w_j$ when $i\neq j$. The cosets of the $w_i$ modulo $L^*$ freely generate $P$ as a $\Z$-module, and they are permuted by the $G$-action, hence $P$ is a permutation $G$-lattice. 
		
		The ring map $\pi^*_L:L[T]\to L[x_1,\dots,x_n][1/w]$ induces a group homomorphism $L[T]^*\to L[x_1,\dots,x_n][1/w]^*$, hence an injective $G$-homomorphism $\pi':\hat{T}\to P$. The restriction of $s:X\to \A^n$ to the domain of definition of $\pi$ gives a map $k[x_1,\dots,x_n][1/w]\to k[X][1/u]$, for some non-zero $u\in k[X]$. This in turn induces a homomorphism of $G$-lattices $P\to N:=L[X][1/u]^*/(k'\otimes L)^*$. We have an exact sequence \[0\to N'\to N\to R\to 0\] for some $G$-lattice $R$. We have constructed the following commutative diagram:
		\begin{equation}\label{diag1}
		\begin{tikzcd}
		& \hat{T} \arrow[d, "\psi^*"] \arrow[r] & P  \arrow[d] \\
		0\arrow[r] & N' \arrow[r] & N\arrow[r] & R \arrow[r]&  0. 
		\end{tikzcd}
		\end{equation}
		The norm map $L(X)^*\to L(T)^*$ induces a commutative square
		\begin{equation*}
		\begin{tikzcd}
		L[X]^* \arrow[d, "\psi_*"] \arrow[r] & L[X][1/u]^* \arrow[d,"\psi_*"] \\
		L[T]^* \arrow[r] & L[T][1/\psi_*(u)]^*.
		\end{tikzcd}
		\end{equation*}\noindent 
		Since $u\in k[X]$, we have $\psi_*(u)\in k[T]$. We obtain a commutative square
		\begin{equation*}
		\begin{tikzcd}
		L[X]^*/(k'\otimes L)^* \arrow[d, "\psi_*"] \arrow[r] & L[X][1/u]^*/(k'\otimes L)^* \arrow[d,"\psi_*"] \\
		L[T]^*/L^* \arrow[r] & L[T][1/\psi_*(u)]^*/L^*.
		\end{tikzcd}
		\end{equation*}\noindent 
		Recall that $\hat{T}= L[T]^*/L^*$, $N'=L[X]^*/(k'\otimes L)^*$ and $N=L[X][1/u]^*/(k'\otimes L)^*$, hence the above diagram is
		\begin{equation*}
		\begin{tikzcd}
		N' \arrow[d, "\psi_*"] \arrow[r] & N \arrow[d,"\psi_*"] \\
		\hat{T} \arrow[r] & M,
		\end{tikzcd}
		\end{equation*}\noindent 
		where we define $M:=L[T][1/\psi_*(u)]^*/L^*$. The homomorphism $\hat{T}\to M$ is injective, and its cokernel is $Q=\on{Div}_{Z_L}T_L$, where $Z$ is the zero locus of $\psi_*(u)$ inside $T$. In other words, $Q$ is the permutation $G$-lattice generated by the irreducible factors of $\psi_*(u)$ in $L[T]$. 
		By construction, we have a commutative diagram with exact rows
		\begin{equation}\label{diag2}
		\begin{tikzcd}
		0\arrow[r] & {N'} \arrow[r] \arrow[d] & N \arrow[r] \arrow[d]  & R \arrow[r] \arrow[d] &  0 \\
		0\arrow[r] & {\hat{T}} \arrow[r] & M \arrow[r] & Q \arrow[r]&  0. 
		\end{tikzcd}
		\end{equation} 
		Combining (\ref{diag1}) and (\ref{diag2}) gives a diagram of type (\ref{saltdiag}). By construction, $\psi_*\circ\psi^*=\phi_{\hat{T},d}$, where $d=\on{deg}\phi$ is coprime with $p$, and $P$ and $Q$ are permutation $G$-lattices, hence (\ref{character3}) follows.
	\end{proof}
	
	\begin{cor}\label{sylowfortori}
		Let $T$ be a $k$-torus with splitting field $L$ and $G:=\on{Gal}(L/k)$. For a prime $p$, denote by $G_p$ a $p$-Sylow subgroup of $G$, and let $k_p:=L^{G_p}$ be the fixed field of $G_p$. Then $T$ is $p$-retract rational over $k$ if and only $T_{k_p}$ is retract rational over $k_p$.
	\end{cor}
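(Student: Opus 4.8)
The plan is to combine \Cref{character}, \Cref{sylow}, and Saltman's invertibility criterion \cite[Theorem 3.14(a)]{saltman1984retract}, transferring the problem from $G$ to its $p$-Sylow subgroup $G_p$ as we pass from the base field $k$ to $k_p$.

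First I would set up the base change. Because $G_p=\on{Gal}(L/k_p)$, the field $L$ remains a splitting field of $T_{k_p}$, and the character lattice of $T_{k_p}$ is $\hat{T}$ equipped with the $G$-action restricted to $G_p$; write $\hat{T}|_{G_p}$ for this $G_p$-lattice. I then check that flasque resolutions behave well under this restriction. Fix a flasque resolution $0\to \hat{T}\to P\to F\to 0$ of $\hat{T}$ as a $G$-lattice. The restriction of a permutation $G$-lattice is a permutation $G_p$-lattice (a transitive permutation lattice $\Z[G/H]$ restricts to the permutation $G_p$-lattice attached to the $G_p$-set $G/H$), and the restriction of a flasque $G$-lattice is flasque, since flasqueness is tested on the groups $\hat{H}^{-1}$ of subgroups and every subgroup of $G_p$ is already a subgroup of $G$. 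Hence the restricted sequence is a flasque resolution of $\hat{T}|_{G_p}$, so $F|_{G_p}$ represents $[\hat{T}|_{G_p}]^{\on{fl}}$. In particular, $F$ is invertible as a $G_p$-lattice if and only if $[\hat{T}|_{G_p}]^{\on{fl}}$ is invertible.

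The corollary now follows from a chain of equivalences. By \Cref{character}, $T$ is $p$-retract rational over $k$ if and only if $[\hat{T}]^{\on{fl}}$ is $p$-invertible as a $G$-lattice, that is, if and only if $F$ is $p$-invertible as a $G$-lattice. Applying \Cref{sylow} to the lattice $M=F$, the equivalence (\ref{sylow1})$\Leftrightarrow$(\ref{sylow2b}) shows that this holds if and only if $F$ is invertible as a $G_p$-lattice, which by the previous paragraph is equivalent to the invertibility of $[\hat{T}|_{G_p}]^{\on{fl}}$. Finally, Saltman's criterion \cite[Theorem 3.14(a)]{saltman1984retract}, applied over the base field $k_p$, identifies this last condition with the retract rationality of $T_{k_p}$ over $k_p$. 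Concatenating these equivalences proves the statement.

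I do not expect a serious obstacle here: the three cited results do all the work, and the only point needing care is the compatibility of the flasque class with restriction to $G_p$, which reduces to the fact that permutation and flasque lattices restrict to lattices of the same type.
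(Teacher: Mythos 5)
Your proposal is correct and follows essentially the same route as the paper: fix $F\in[\hat{T}]^{\on{fl}}$, note that a flasque resolution over $G$ restricts to one over $G_p$, and then chain together \Cref{character}, the equivalence (\ref{sylow1})$\Leftrightarrow$(\ref{sylow2b}) of \Cref{sylow}, and Saltman's invertibility criterion (the paper cites it via \cite[Lemma 9.5.4(b)]{lorenz2006multiplicative}, which is the same result as \cite[Theorem 3.14(a)]{saltman1984retract}). The only difference is that you spell out the restriction-compatibility of permutation and flasque lattices in more detail than the paper does, which is harmless.
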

	
	\begin{proof}
		The character lattice of $T_{k_p}$ is $\hat{T}$ viewed as a $G_p$-lattice via restriction. Fix a $G$-lattice $F\in [\hat{T}]^{\on{fl}}$. A flasque resolution of $M$ as a $G$-lattice is in particular a flasque resolution of $M$ as a $G_p$-lattice, hence $F\in [\hat{T}_{k_p}]^{\on{fl}}$ when $F$ is viewed as a $G_p$-lattice.
		
		By \Cref{character}, $T$ is $p$-retract rational if and only if $F$ is $p$-invertible. By \Cref{sylow}, this is the same as $F$ being invertible as a $G_p$-lattice. By \cite[Lemma 9.5.4(b)]{lorenz2006multiplicative}, this is equivalent to the retract rationality of $T_{k_p}$.
	\end{proof}

	\section{Proof of Theorem \ref{local} and Corollary \ref{bglocal}}\label{sec4}
	
	\begin{proof}[Proof of \Cref{local}]
		If $T$ is retract rational, it is also $p$-retract rational for every prime $p$. Conversely, if $T$ is $p$-retract rational for every $p$, then by \Cref{character}, $[\hat{T}]^{\on{fl}}$ is $p$-invertible for every $p$. By \Cref{firstprop}\ref{firstprop2}, $[\hat{T}]^{\on{fl}}$ is invertible. By \cite[Lemma 9.5.4(b)]{lorenz2006multiplicative}, $T$ is retract rational.	
	\end{proof}
	
	Let $G$ be a group of multiplicative type. Embedding $G$ in some quasi-split torus $S$, one obtains a short exact sequence \begin{equation}\label{bgres}1\to G\to S\to T\to 1\end{equation} where $T$ is a torus and $S$ is a quasi-split torus. Since $S$ is quasi-split, it is an open subset of some generically free linear $T$-representation. By definition, $BG$ is retract rational ($p$-retract rational) if and only if $T$ is; see the introduction. Recall that the rationality ($p$-retract rationality) of $T$ does not depend on the choice of the embedding $G\hookrightarrow S$, by the no-name lemma \cite[Lemma 2.1]{reichstein2006birational}.
	
	\begin{proof}[Proof of \Cref{bglocal}]
		Let $G$ be a group of multiplicative type, and consider a sequence of type (\ref{bgres}) for $G$. By assumption, $T$ is $p$-retract rational for every prime $p$, thus it is retract rational by \Cref{local}. This means precisely that $BG$ is retract rational, as desired. 
	\end{proof}

	\section{Proof of \Cref{Theorem 1.3}}\label{sec5}
	
	Let $G$ be a finite group. Recall that the augmentation map $\epsilon:\Z[G]\to \Z$ is defined by $\epsilon(g)=1$ for every $g\in G$. Set $I_G:=\on{ker}\epsilon$ and let $J_G:=I_G'$ be the dual of $I_G$. Equivalently, $J_G$ is the cokernel of the norm map $N:\Z\to \Z[G]$ given by $N(1)=\sum_{g\in G}g$. If $L/k$ is a Galois extension of Galois group $G$, then $J_G$ is isomorphic to the character lattice of the norm one torus $R^{(1)}_{L/k}(\G_m)$.
	
	The following is a $p$-local version of \cite[Proposition 2]{colliot1977r}.
	\begin{prop}\label{norminv}
		Let $G$ be a finite group, and let $G_p$ be a $p$-Sylow subgroup of $G$. The following conditions are equivalent:
		\begin{enumerate}
			\item\label{norminv1} $G_p$ is cyclic;
			\item\label{norminv2} every flasque $G$-lattice is $p$-invertible;
			\item\label{norminv3} $[J_G]^{\on{fl}}$ is $p$-invertible.
		\end{enumerate}
	\end{prop}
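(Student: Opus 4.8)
The plan is to reduce all three conditions to the $p$-Sylow subgroup $G_p$ and then import the classical criterion \cite[Proposition 2]{colliot1977r} for $G_p$. The bridge is \Cref{sylow}: for a flasque $G$-lattice $F$, being $p$-invertible over $G$ is the same as being invertible as a $G_p$-lattice. As in the proof of \Cref{sylowfortori}, restriction sends a flasque resolution of a $G$-lattice to a flasque resolution of its restriction, and restriction of a permutation (resp. flasque) lattice is again permutation (resp. flasque); in particular, if $F$ represents $[\hat T]^{\on{fl}}$ over $G$, then $F|_{G_p}$ is flasque and represents the flasque class of $\hat T|_{G_p}$ over $G_p$. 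I would record these reductions first, so that the rest of the argument becomes formal.

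The key input is a restriction identity for the norm-one lattice. Set $m:=[G:G_p]$. Restricting the defining sequence $0\to I_G\to \Z[G]\xrightarrow{\epsilon}\Z\to 0$ to $G_p$, the lattice $\Z[G]|_{G_p}$ is a free, hence permutation, $G_p$-lattice of rank $m$, and the augmentation becomes the map $(x_1,\dots,x_m)\mapsto \sum_{i}\epsilon(x_i)$ on $\Z[G_p]^{m}$. The unipotent $\Z[G_p]$-automorphism $(x_1,\dots,x_m)\mapsto(x_1+\dots+x_m,x_2,\dots,x_m)$ identifies this with projection onto the first factor followed by $\epsilon$, whose kernel is $I_{G_p}\oplus \Z[G_p]^{m-1}$. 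Thus $I_G|_{G_p}\cong I_{G_p}\oplus \Z[G_p]^{m-1}$, and dualizing (using that $\Z[G_p]$ is self-dual and that the dual commutes with restriction) gives $J_G|_{G_p}\cong J_{G_p}\oplus \Z[G_p]^{m-1}$. In particular $J_G|_{G_p}$ is stably equivalent to $J_{G_p}$ over $G_p$, so by \Cref{firstprop}\ref{firstprop4} their flasque classes agree: $[J_G|_{G_p}]^{\on{fl}}=[J_{G_p}]^{\on{fl}}$.

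With these preliminaries, the three implications are short. For $(\ref{norminv1})\Rightarrow(\ref{norminv2})$: if $G_p$ is cyclic, then by \cite[Proposition 2]{colliot1977r} applied to $G_p$ every flasque $G_p$-lattice is invertible, so for any flasque $G$-lattice $F$ the restriction $F|_{G_p}$ is invertible and $F$ is $p$-invertible by \Cref{sylow}. The implication $(\ref{norminv2})\Rightarrow(\ref{norminv3})$ is immediate, since a flasque representative of $[J_G]^{\on{fl}}$ is a flasque $G$-lattice, hence $p$-invertible by $(\ref{norminv2})$. For $(\ref{norminv3})\Rightarrow(\ref{norminv1})$: take a flasque representative $F$ of $[J_G]^{\on{fl}}$; by \Cref{sylow} its $p$-invertibility over $G$ means $F|_{G_p}$ is invertible over $G_p$, and since $F|_{G_p}$ represents $[J_{G_p}]^{\on{fl}}$ by the restriction identity, this says $[J_{G_p}]^{\on{fl}}$ is invertible, which by \cite[Proposition 2]{colliot1977r} applied to the $p$-group $G_p$ forces $G_p$ to be cyclic.

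The main obstacle is the restriction identity $J_G|_{G_p}\cong J_{G_p}\oplus \Z[G_p]^{m-1}$, since this is exactly what lets the classical criterion for the single group $G_p$ control the $p$-local behaviour over $G$; the only point requiring care is that restriction of a flasque resolution remains a flasque resolution, so that the flasque classes match under restriction. Everything else is a formal consequence of \Cref{sylow} and \Cref{firstprop}\ref{firstprop4}.
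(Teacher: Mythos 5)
Your proof is correct and follows essentially the same route as the paper: reduce everything to the $p$-Sylow subgroup via \Cref{sylow} and then invoke \cite[Proposition 2]{colliot1977r} for $G_p$. The only difference is that you spell out the restriction identity $J_G|_{G_p}\cong J_{G_p}\oplus\Z[G_p]^{[G:G_p]-1}$ (and hence $[J_G|_{G_p}]^{\on{fl}}=[J_{G_p}]^{\on{fl}}$), a point the paper's proof uses implicitly when it applies the cited proposition to $G_p$.
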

	
	\begin{proof}
		By definition $[J_G]^{\on{fl}}$ is flasque, so (\ref{norminv2}) implies (\ref{norminv3}). 
		
		(\ref{norminv3})$\Rightarrow$(\ref{norminv1}). By \Cref{sylow}, $[J_G]^{\on{fl}}$ is invertible as a $G_p$-lattice. By \cite[Proposition 2]{colliot1977r}, $G_p$ is cyclic.
		
		(\ref{norminv1})$\Rightarrow$(\ref{norminv2}). By \Cref{sylow}, we may assume that $G$ is a cyclic $p$-group. In this case, by \cite[Proposition 2]{colliot1977r} every flasque $G$-lattice is invertible, hence $p$-invertible. 
	\end{proof} 
	
	\begin{proof}[Proof of \Cref{Theorem 1.3}]
		Let $L/k$ be a finite Galois extension. By \Cref{character} and \Cref{norminv} the torus $R^{(1)}_{L/k}(\G_m)$ is $p$-retract rational if and only if the $p$-Sylow subgroups of $\on{Gal}(L/k)$ are cyclic. 
		
		Let $S$ be a finite set of primes. Set $G:=\prod_{p\in S} (\Z/p\Z)^2$, and consider a Galois field extension $L/k$ such that $\on{Gal}(L/k)\cong G$. It is well known that such extensions exist, even for $k=\Q$; see e.g. \cite[Chapter VI, Exercise 23]{lang2002algebra}. We conclude that $T:=R^{(1)}_{L/k}(\G_m)$ is $p$-retract rational if and only if $p \not \in S$, as claimed.
	\end{proof}
	
	\begin{lemma}\label{stackprr}
		Let $T$ be a $k$-torus, and let $T'$ be its dual. For every prime $p$, $BT$ is $p$-retract rational if and only if $T'$ is $p$-retract rational. 
	\end{lemma}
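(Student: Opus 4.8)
The plan is to reduce the statement, via \Cref{character}, to a purely lattice-theoretic comparison, and then to exploit the $\Z$-dual $M\mapsto M^\vee:=\on{Hom}_{\Z}(M,\Z)$ together with flasque and coflasque resolutions. I write $G=\on{Gal}(L/k)$ for a splitting field $L$ of $T$. As explained in \Cref{sec4}, embedding the torus $T$ (a group of multiplicative type) into a quasi-split torus $S$ gives a sequence $1\to T\to S\to T_0\to 1$, and $BT$ is $p$-retract rational if and only if $T_0$ is. Dualizing produces an exact sequence of $G$-lattices
\[
0\to \hat{T_0}\to \hat S\to \hat T\to 0
\]
with $\hat S$ a permutation lattice. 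Since $\hat{T'}=\hat T^\vee$, \Cref{character} reduces the lemma to the assertion that $[\hat{T_0}]^{\on{fl}}$ is $p$-invertible if and only if $[\hat T^\vee]^{\on{fl}}$ is $p$-invertible.

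The next step identifies $[\hat{T_0}]^{\on{fl}}$ with the flasque class of a coflasque representative of $\hat T$. I would choose a coflasque resolution $0\to C\to Q\to \hat T\to 0$, with $Q$ permutation and $C$ coflasque, so that $[\hat T]^{\on{cofl}}=[C]$. Forming the fibre product $X:=\hat S\times_{\hat T}Q$, the projection to $\hat S$ gives $0\to C\to X\to \hat S\to 0$, which splits because $\on{Ext}^1_G(Q',C)=0$ for every permutation lattice $Q'$ (coflasqueness of $C$), whence $X\cong C\oplus \hat S$; the projection to $Q$ gives $0\to \hat{T_0}\to C\oplus \hat S\to Q\to 0$. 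As $Q$ is a permutation lattice, a short argument with flasque resolutions (if $A\subseteq B$ with $B/A$ permutation, then a flasque resolution of $B$ restricts to one of $A$ with the same flasque part, so $[A]^{\on{fl}}=[B]^{\on{fl}}$) yields $[\hat{T_0}]^{\on{fl}}=[C\oplus \hat S]^{\on{fl}}=[C]^{\on{fl}}$.

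Finally I would run the duality. Dualizing the coflasque resolution gives a flasque resolution $0\to \hat T^\vee\to Q^\vee\to C^\vee\to 0$ of $\hat T^\vee$, with $Q^\vee$ permutation and $C^\vee$ flasque, so $[\hat T^\vee]^{\on{fl}}=[C^\vee]$ is represented by the flasque lattice $C^\vee$. It then suffices to establish the chain
\[
[C]^{\on{fl}}\ p\text{-invertible}\iff C\ p\text{-invertible}\iff C^\vee\ p\text{-invertible}\iff [C^\vee]\ p\text{-invertible}.
\]
The middle equivalence is immediate: dualizing a splitting of $C_{(p)}$ off a permutation lattice gives a splitting of $C^\vee_{(p)}$. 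The last equivalence holds because $C^\vee$ is already flasque. For the first, take a flasque resolution $0\to C\to P\to F\to 0$ and localize at $p$; if $C$ is $p$-invertible then $\on{Ext}^1_G(F,C)_{(p)}=0$ by \Cref{ext}\ref{ext2}, so the localized sequence splits and $F=[C]^{\on{fl}}$ is $p$-invertible, while conversely, if $F$ is $p$-invertible then $\on{Ext}^1(F_{(p)},C_{(p)})$ is a direct summand of $\bigoplus_H H^1(H,C)_{(p)}=0$ by coflasqueness of $C$, so the sequence again splits and $C$ is $p$-invertible.

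The main obstacle is this last step: keeping the flasque/coflasque bookkeeping straight and verifying that the relevant localized $\on{Ext}^1$ groups vanish. Everything else is formal manipulation of resolutions and fibre products; the substantive inputs are the interplay between $p$-invertibility and vanishing of localized $\on{Ext}^1$ and Tate cohomology recorded in \Cref{ext} and \Cref{sylow}, and the fact, used repeatedly, that $\on{Hom}_{\Z}(-,\Z)$ sends permutation lattices to permutation lattices and interchanges flasque and coflasque lattices while preserving $p$-invertibility.
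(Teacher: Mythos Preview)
Your argument is correct, but it takes a different route from the paper's proof. The paper argues as follows: by \Cref{sylowfortori}, both ``$T'$ is $p$-retract rational'' and ``$BT$ is $p$-retract rational'' are equivalent to their retract-rational counterparts over the field $k_p:=L^{G_p}$ fixed by a $p$-Sylow subgroup; then the non-local statement over $k_p$ is exactly \Cref{stack}\ref{stackret} from the Appendix. In other words, the paper reduces the $p$-local statement to the ordinary (non-$p$) statement by passing to a Sylow subgroup, and then quotes the Colliot-Th\'el\`ene result. Your approach instead proves a $p$-local analogue of \Cref{stack}\ref{stackret} directly: you set up the same coflasque/flasque machinery as in the Appendix, but track $p$-invertibility throughout rather than invertibility, using \Cref{ext}\ref{ext2} and the coflasqueness of $C$ to split the relevant localized extensions. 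Your fibre-product identification $[\hat{T_0}]^{\on{fl}}=[C]^{\on{fl}}$ is a clean way to organize the comparison (the Appendix proves essentially the same thing by a splitting argument). The paper's route is shorter on the page because it outsources the work to \Cref{sylowfortori} and \Cref{stack}; your route is more self-contained and avoids the Sylow reduction entirely, at the cost of reproducing the resolution manipulations in a $p$-local form.
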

	
	\begin{proof}
		Let $L$ be the splitting field of $T$, $G=\on{Gal}(L/k)$, $G_p$ a $p$-Sylow subgroup of $G$, and $k_p=L^{G_p}$ the fixed field of $G_p$. By \Cref{sylowfortori}, $T'$ is $p$-retract rational if and only if $T'_{k_p}$ is retract rational over $k_p$.
		Consider an exact sequence \[1\to T\to S\to Q\to 1\] where $S$ is quasi-split. Since $BT$ is $p$-retract rational if and only if $Q$ is, from \Cref{sylowfortori} we deduce that $BT$ is $p$-retract rational if and only if $BT_{k_p}$ is retract rational over $k_p$. By \Cref{stack}, this is equivalent to $T'_{k_p}$ being retract rational over $k_p$.
	\end{proof}

	\begin{cor}\label{stackspecified}
		For every finite set of primes $S$, there exists an algebraic torus $T$ such that the classifying stack $BT$ is $p$-retract rational if and only if $p\notin S$.
	\end{cor}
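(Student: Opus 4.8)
The plan is to derive this purely formally by combining \Cref{Theorem 1.3} with \Cref{stackprr}, the only nontrivial input being that torus duality is an involution. First I would invoke \Cref{Theorem 1.3} to produce, for the given finite set of primes $S$, a $k$-torus $T'$ that is $p$-retract rational if and only if $p\notin S$. Concretely, one may take $T'=R^{(1)}_{L/k}(\G_m)$ for a Galois extension $L/k$ with $\on{Gal}(L/k)\cong\prod_{p\in S}(\Z/p\Z)^2$; its character lattice is $J_G$, and by \Cref{character} together with \Cref{norminv} this torus is $p$-retract rational exactly when the $p$-Sylow subgroups of $G$ are cyclic, i.e.\ exactly when $p\notin S$.

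Next I would let $T$ be the dual torus of $T'$, so that the dual of $T$ is canonically $T'$. This is the step requiring a word of justification: passing to the dual lattice $M\mapsto M'=\on{Hom}_\Z(M,\Z)$ with the contragredient $G$-action is an involution on $G$-lattices, hence dualization is an involution on $k$-tori. In the concrete model above this simply amounts to $J_G'\cong I_G$ and $I_G'\cong J_G$, so $T$ may be taken to be the $k$-torus with character lattice $I_G$, and its dual is then the norm one torus $T'$.

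Finally I would apply \Cref{stackprr} to the torus $T$: for every prime $p$, the stack $BT$ is $p$-retract rational if and only if the dual of $T$ is $p$-retract rational. Since the dual of $T$ is $T'$, which is $p$-retract rational precisely when $p\notin S$, it follows that $BT$ is $p$-retract rational if and only if $p\notin S$, as required. The argument is essentially formal once \Cref{Theorem 1.3} and \Cref{stackprr} are available, and introduces no new cohomological or birational input; the only point needing care is matching the notion of ``dual'' used in \Cref{stackprr} with the torus furnished by \Cref{Theorem 1.3}, which is exactly the content of the involutivity of lattice duality. I therefore expect no genuine obstacle beyond this bookkeeping.
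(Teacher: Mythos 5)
Your proposal is correct and follows essentially the same route as the paper, whose proof is simply to combine \Cref{Theorem 1.3} and \Cref{stackprr}; your extra remark on the involutivity of lattice duality is exactly the implicit bookkeeping needed to make that combination work.
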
	
	
	\begin{proof}
		Combine \Cref{Theorem 1.3} and \Cref{stackprr}.
	\end{proof}	
	
	\section{Appendix}
	The following result was stated by Merkurjev. The proof given below develops a message of Colliot-Th\'el\`ene to Merkurjev (11th October 2015).
	\begin{prop}\label{stack}
		Let $T$ be an algebraic $k$-torus, and let $T'$ be its dual. Then:
		\begin{enumerate}[label=(\alph*)]
			\item\label{stackstab} $BT$ is stably rational if and only if $T'$ is,
			\item\label{stackret} $BT$ is retract rational if and only if $T'$ is. 
		\end{enumerate} 
	\end{prop}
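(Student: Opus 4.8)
The plan is to pass from stacks and tori to the combinatorics of $G$-lattices and then to run everything through the self-dual flasque--coflasque machinery. Fix a splitting field $L$ of $T$, set $G:=\on{Gal}(L/k)$ and $M:=\hat{T}$; the dual torus $T'$ has character lattice $M^\vee:=\on{Hom}_\Z(M,\Z)$. By the criteria for tori (\cite[Theorem 3.14(a)]{saltman1984retract}, and \cite[\S 9.5]{lorenz2006multiplicative} for the stable case), $T'$ is retract rational iff $[M^\vee]^{\on{fl}}$ is invertible, and stably rational iff $[M^\vee]^{\on{fl}}$ is trivial, i.e.\ represented by a stably permutation (equivalently permutation) lattice. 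So the whole statement reduces to matching $[M^\vee]^{\on{fl}}$ to a lattice invariant of $BT$.

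First I would reduce $BT$ to a torus. Choose a \emph{coflasque} resolution $0\to C\to P\to M\to 0$ of $M$, with $P$ permutation and $C$ coflasque (such resolutions exist; see \cite{colliot1977r} or \cite[\S 2.7]{lorenz2006multiplicative}). Dually this is a closed embedding $T\hookrightarrow S$ into the quasi-split torus $S$ with $\hat{S}=P$, and the quotient $Q:=S/T$ has $\hat{Q}=C$. As explained in \Cref{sec4}, $BT$ is retract rational (resp.\ stably rational) iff $Q$ is; since this is independent of the chosen embedding by the no-name lemma, we may use this coflasque one. Hence $BT$ is retract rational iff $[C]^{\on{fl}}$ is invertible, and stably rational iff $[C]^{\on{fl}}$ is trivial.

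The heart of the argument is the following claim about a coflasque lattice $C$: the class $[C]^{\on{fl}}$ is invertible (resp.\ trivial) if and only if $C$ itself is invertible (resp.\ stably permutation). To prove it I would take a flasque resolution $0\to C\to P'\to F\to 0$ and note that, invertibility and stable permutation-ness being stable-equivalence invariants (as in the argument of \Cref{firstprop}\ref{firstprop4}), the hypothesis on the class forces the representative $F$ to have the same property. When $F$ is invertible we get $\on{Ext}^1_G(F,C)=0$: indeed $F$ is a direct summand of some $\oplus_i\Z[G/H_i]$, and by Shapiro's lemma $\on{Ext}^1_G(\Z[G/H_i],C)\cong H^1(H_i,C)=0$ because $C$ is coflasque. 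Thus the flasque resolution splits, so $C$ is a summand of $P'$ (invertible case); in the stable case the splitting $P'\cong C\oplus F$ combined with $F$ stably permutation shows $C$ stably permutation. The reverse implications are immediate from the definitions. I expect this lemma --- the $\on{Ext}^1$ vanishing and the splitting --- to be the main obstacle.

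Finally I would invoke duality. Dualizing $0\to C\to P\to M\to 0$ yields $0\to M^\vee\to P^\vee\to C^\vee\to 0$, which is a flasque resolution of $M^\vee$ since $P^\vee$ is permutation and $C^\vee$ is flasque; hence $[M^\vee]^{\on{fl}}=[C^\vee]$. As the operation $(-)^\vee$ is self-dual on permutation lattices, $C$ is invertible (resp.\ stably permutation) iff $C^\vee$ is. Chaining the equivalences gives, for retract rationality, $BT$ retract rational $\iff [C]^{\on{fl}}$ invertible $\iff C$ invertible $\iff C^\vee$ invertible $\iff [M^\vee]^{\on{fl}}$ invertible $\iff T'$ retract rational, and the identical chain with ``invertible'' replaced by ``stably permutation / trivial'' settles the stable case. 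Everything outside the coflasque lemma --- the reduction and the duality bookkeeping --- is then formal.
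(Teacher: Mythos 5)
Your proof is correct, and at its core it runs on the same machinery as the paper's: a coflasque resolution $0\to \hat Q\to \hat R\to \hat T\to 0$ of the character lattice, the fact that the dual of a coflasque lattice is flasque (so that the dualized sequence is a flasque resolution of $\hat T'$), and the splitting of a flasque resolution of the coflasque kernel when the flasque class is invertible. Your key lemma --- for coflasque $C$, the class $[C]^{\on{fl}}$ is invertible (resp.\ trivial) if and only if $C$ is invertible (resp.\ stably permutation) --- is precisely what the paper extracts from \cite[Lemme 1(vii)' and (viii)]{colliot1977r}; you reprove the splitting from scratch via Shapiro's lemma and $H^1(H,C)=0$, which is fine and makes the argument more self-contained. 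The genuine difference is organizational: you run one symmetric chain of equivalences, so both implications of each part fall out simultaneously, whereas the paper handles the direction ``$T'$ nice $\Rightarrow$ $BT$ nice'' separately and more geometrically, by dualizing a quasi-split (resp.\ invertible) resolution of $T'$ to produce an explicit versal $T$-torsor over a rational (resp.\ retract rational) base. Your route is more uniform; the paper's forward half is more transparent as geometry. One small point to make explicit: the ``reverse implications'' of your key lemma ($C$ invertible, or stably permutation, implies the same for $[C]^{\on{fl}}$) are not purely definitional --- they need either the additivity of the flasque class or, again, the vanishing $\on{Ext}^1_G(F,C)=0$ for $F$ flasque and $C$ invertible (\cite[Lemme 9]{colliot1977r}) to split the resolution in the other direction --- and these are exactly the implications your chain uses for ``$T'$ nice $\Rightarrow$ $BT$ nice,'' so they should be spelled out rather than waved at.
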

	
	\begin{proof}
		If $T'$ is stably rational, by \cite[Theorem 2 p. 52]{voskresenskii2011algebraic} there exists a short exact sequence \[1\to S_1\to S_2\to T'\to 1\] where $S_1$ and $S_2$ are quasi-split. Recall that a quasi-split torus is isomorphic to its dual, and is an open subset of some affine space, hence it is a rational variety. The dual sequence gives a versal $T$-torsor $S'_2\to S'_1$ with rational base, hence $BT$ is stably rational.
		
		If $T'$ is retract rational, by \cite[Lemma 9.5.4(b)]{lorenz2006multiplicative} the flasque class of $\hat{T}'$ is invertible, hence we have a sequence \[1\to U\to S\to T'\to 1\] where $S$ is quasi-split and $U$ is an invertible torus, i.e. there exists an isomorphism of tori $U\times Q\cong P$, where $Q$ is a torus and $P$ is a quasi-split torus. Dualization gives $U'\times Q'\cong P'$, hence $U'$ is retract rational. The dual $1\to T\to S'\to U'\to 1$ of the previous sequence yields a versal $T$-torsor $S'\to U'$ over a retract rational base, hence $BT$ is retract rational.  
		
		Before proving the converse, we need some preparation. Let $L$ be the splitting field of $T$, and let $G:=\on{Gal}(L/k)$. Choose a coflasque resolution of $\hat{T}$ \begin{equation}\label{stack1}0\to \hat{Q}\to \hat{R}\to \hat{T}\to 0,\end{equation} that is, $\hat{R}$ is a permutation $G$-lattice and $\hat{Q}$ is coflasque, and a flasque resolution of $\hat{Q}$ \begin{equation}\label{stack2}0\to \hat{Q}\xrightarrow{\alpha} \hat{S}\xrightarrow{\alpha'} \hat{F}\to 0.\end{equation} Since $R\to Q$ is a versal $T$-torsor, $BT$ is stably rational (resp. retract rational) if and only if $Q$ is. Applying \cite[Theorem 2 p. 52]{voskresenskii2011algebraic} (resp. \cite[Lemma 9.5.4(b)]{lorenz2006multiplicative}) to sequence (\ref{stack2}), this is in turn equivalent to $\hat{F}$ being stably permutation (resp. invertible). 
		
		\ref{stackstab}. Assume that $BT$ is stably rational. By (\ref{stack1}), this implies that $Q$ is stably rational. Since $\hat{F}\in [\hat{Q}]^{\on{fl}}$, by \cite[Theorem 2 p. 52]{voskresenskii2011algebraic} there is a permutation $G$-lattice $\hat{S}_1$ such that $\hat{F}\oplus \hat{S}_1$ is a permutation $G$-lattice. Then the sequence \[0\to \hat{Q}\xrightarrow{(\alpha,0)} \hat{S}\oplus\hat{S}_1\xrightarrow{(\alpha',\on{id})} \hat{F}\oplus \hat{S}_1\to 0\] is another flasque resolution of $\hat{Q}$. We may thus assume that $\hat{F}$ is a permutation $G$-lattice. By \cite[Lemme 1(viii)]{colliot1977r}, it follows that $\hat{S}\cong \hat{Q}\oplus \hat{F}$, and we obtain $\hat{S}\cong \hat{Q}'\oplus \hat{F}$ by dualization. Consider the sequence \[0\to \hat{T}'\xrightarrow{\beta}\hat{R}'\xrightarrow{\beta'}\hat{Q}'\to 0\] dual to (\ref{stack1}). Then the sequence \[0\to \hat{T}'\xrightarrow{(\beta,0)}\hat{R}'\oplus\hat{F}\xrightarrow{(\beta',\on{id})}\hat{Q}'\oplus\hat{F}\to 0\] is also exact. By \cite[Theorem 2 p. 52]{voskresenskii2011algebraic}, we conclude that $T'$ is stably rational.
		
		\ref{stackret}. We have seen that $BT$ is retract rational if and only $\hat{F}$ is invertible. Assume that $BT$ is retract rational. Applying \cite[Lemme 1(vii)']{colliot1977r} to (\ref{stack2}) we have $\hat{S}\cong \hat{Q}\oplus \hat{F}$, hence $\hat{S}'\cong\hat{Q}'\oplus\hat{F}'$, showing that $\hat{Q}'$ is invertible. Since $\hat{Q}'\in [\hat{T}']^{\on{fl}}$, by \cite[Lemma 9.5.4(b)]{lorenz2006multiplicative} we conclude that $T'$ is retract rational. 
	\end{proof}
	
	\section*{Acknowledgments}
	I would like to thank my advisor Zinovy Reichstein for helpful comments, and Jean-Louis Colliot-Th\'el\`ene for communicating \Cref{stack} to me. I thank the referee for greatly improving the exposition of this paper.

\end{document}